\newcommand{\arrow}{\rightarrow}
\newcommand{\ds}{\displaystyle}
\newcommand{\kw}{\rule{2mm}{2mm}}
\newcommand{\ssn}{({\bf SSN}) }
\newcommand{\owl}{({\bf OWL}) }
\newcommand{\nwcg}{({\bf NW-CG}) }
\newcommand{\row}{({\bf OESOM}) }
\newcommand{\norm}[1]{{\|  #1\|}}
\renewcommand{\l}{\left}
\renewcommand{\r}{\right}
\newcommand{\reals}{\mathbb{R}}
\newcommand{\sign}{\mathrm{sign}}
\renewenvironment{proof}{{Proof.}}{\hfill\kw}
\newcounter{theassumption}
\newtheorem{assumption}[theassumption]{{Assumption}}
\newtheorem{proposition}{{Proposition}}
\newtheorem{lemma}{{Lemma}}
\newtheorem{remark}{{Remark}}
\newtheorem{theorem}{{Theorem}}
\begin{document}
\title[Second-order enriched methods for sparse optimization]{Second-order orthant-based methods with enriched Hessian information for sparse $\ell_1$--optimization}%On the use of weak second-order information for accelerating orthantwise algorithms in sparse optimization}%A family of second-order algorithms for solving PDE-constrained optimization problems with sparsity}%\\On the use of hidden second order information in sparse PDE-constrained optimization. Accelerated orthantwise methods using weak second-order information}

\author{J.C. De los Reyes$^\ddag$, E. Loayza$^\ddag$ \and P. Merino$^\ddag$}
%\address{}  
\address{$^\ddag$Research Center of Mathematical Modelling (MODEMAT) and Department of Mathematics, Escuela Polit\'ecnica Nacional, Quito, Ecuador}
\keywords{}
\subjclass[2010]{49M15, 65K05, 90C53, 49J20, 49K20}
\thanks{$^*$This research has been supported by SENESCYT Award PIC-015-INAMHI-001 \emph{''Sistema de Pron\'ostico del Tiempo para todo el Territorio Ecuatoriano: Modelizaci\'on Num\'erica y Asimilaci\'on de Datos''}, a joint project between the Research Center on Mathematical Modelling (MODEMAT) and the Instituto Nacional de Meteorolog\'{\i}a e Hidrolog\'{\i}a (INAMHI). Moreover, we acknowledge partial support of MATHAmSud project SOCDE ``Sparse Optimal Control of Differential Equations''.}

\smallskip
\begin{abstract}
We present a second order algorithm, based on orthantwise directions, for solving optimization problems involving the sparsity enhancing $\ell_1$-norm. The main idea of our method consists in modifying the descent orthantwise directions by using second order information both of the regular term and (in weak sense) of the $\ell_1$-norm. The weak second order information behind the $\ell_1$-term is incorporated via a partial Huber regularization. One of the main features of our algorithm consists in a faster identification of the active set. We also prove that a reduced version of our method is equivalent to a semismooth Newton algorithm applied to the optimality condition, under a specific choice of the algorithm parameters. We present several computational experiments to show the efficiency of our approach compared to other state-of-the-art algorithms. 
\end{abstract}

\maketitle
% 35Q53 Burgers Equation
% 49K20 Optimality conditions
% 49J20 Optimal control of pde
% 80M10 FEM
% 49N05 Linear optimal control problemschévere
% 65N12 Stability and convergence of numerical methods
% 41A25 Rate of convergence, degree of approximation
% 90C34 Semi-infinite programming

\section{Introduction}
% Sparse optimization in general
% Use of second order information
% Our algorithm
% Applications: PDEs and voice recognition. 

Optimization problems involving sparse vectors have important applications in fields like image restoration, machine learning, data classification, among many others \cite{fu2006,sra2012optimization,meinshausen2009lasso,collins2005discriminative}. One of the most common mechanisms for enhancing sparsity consists in the use of the $\ell_1$--norm of the vector in the cost function. This approach leads indeed to sparse solutions, but the design of numerical algorithms becomes challenging due to the non-differentiability of the $\ell_1$--norm. Most of the algorithms developed to solve optimization problems with $\ell_1$--norm consider in fact only first-order information, primal and/or dual, which seems natural due to the presence of the nondifferentiable term (see e.g. \cite{sra2012optimization} and the references therein). 

More recently, some authors have investigated the use of second order information in cases where the objective function is composed by a regular function and the $\ell_1$--norm of the design variable \cite{andrgao07,byrd011,byrd2012family}. By using second order information of the regular part, faster algorithms have been obtained. In the pioneering work \cite{andrgao07}, a limited memory BFGS method was considered in connection with so-called \emph{orthantwise} directions. In \cite{byrd011} and \cite{byrd2012family} these type of directions were considered in connection with Newton and semismooth Newton type updates, respectively. Active set strategies based on second order information have also been recently envisaged (\cite{desantis,solntsev2015algorithm,wright2012accelerated}).

This paper targets the question whether some useful information can also be extracted from the special structure of the $\ell_1$--norm in order to design a second-order method. The main idea of our approach consists of incorporating second-order information ``hidden'' in the $\ell_1$--norm. Roughly speaking, although the $\ell_1$--norm is not differentiable in a classical sense, it has two derivatives in a distributional sense. By using a partial Huber regularization, we are able to extract that information for the update of the second order matrix, while keeping the same orthanwise descent type directions. The resulting algorithm  enables a fast identification of the active set and, therefore, a fast decrease of the cost function. Let us remark that due to the choice of directions, our method solves the original nondifferentiable problem and therefore differs from fully regularized approaches like the one proposed in \cite{fougonz13}.

Our contribution encompasses also a study of the relation of our method with respect to semismooth Newton methods ({\bf SSN}). We show that, under a specific choice of the defining constants of the semismooth Newton algorithm, the updates of a reduced version of our method turn out to be equivalent to the \ssn updates. As a consequence, important convergence results are inherited from the generalized Newton framework, in particular local superlinear convergence. In addition, on basis of this equivalence, an adaptive algorithmic choice of the regularization parameter is proposed.

%In our case, we consider a Huber type regularization which  allows the use of second-order information of the non differentiable  $L^1$-norm. The new approximating term is not twice differentiable; nonetheless, second order information can be obtained in terms of the subdifferential.  The second order information provided by the subdiferential is incorporated to determine descent directions in Newton or Quasi-Newton type algorithms. In this paper we focus on BFGS based algorithms which allows us to compute efficiently an approximation of the hessian associated with the regular terms of the cost functional.
%%
%In addition, in a second step, the new approximated solution is projected using orthant directions for a fast identification of the active set c.f.\cite{andrgao07}. In our framework, we use classical second order information for the regular part of the objective function whereas the subdifferential of the first derivative of the regularization is used for the $\ell_1$-norm term.

%
%The majority of algorithms developed to solve optimization problems with $\ell_1$-norm consider first-order information only. This consideration seems natural since the $\ell_1$-norm term is non differentiable. However, it remains the question wether the special structure of the $\ell_1$-norm term can be used since its non differentiability occurs at the axes.  
%
One of our main motivations for this work is the solution of sparse PDE-constrained optimization problems. In this field, the motivation for considering sparsity lies in the fact that such structure allows to localize the action of the controls, since sparse control functions have small support \cite{stadler09,herzog2012directional,casas2013sparse,milzarek2014semismooth}. Our main concern in this respect is that, whereas semismooth Newton methods applied to the optimality systems of elliptic problems work well (see \cite{stadler09}), their application to time-dependent problems requires, in general, to solve the full optimality system at once, which is computationally costly.

The outline of our paper is as follows. In Section 2 the main idea of our approach is described and the resulting orthantwise enriched second-order algorithm is presented. The main theoretical properties of our algorithm are studied in Sections 3 and 4. In Section 5 we show that a reduced version of our method is equivalent to a semismooth Newton method under a specific choice of the defining parameters. Finally, in Section 6 we exhaustively compare the behavior of our algorithm with other recently proposed methods.
\subsection{Problem formulation}
Let $f: \reals^m \arrow \reals$ and $\beta$ a positive real number. We are interested in the numerical solution of the unconstrained optimization problem
\begin{equation} \label{eq:P}
\min_{x \in \reals^m} \quad\varphi(x):=f(x) + \beta \| x \|_1, \tag{\bf{P}}
\end{equation}
where $\| \cdot \|_1$ corresponds to the standard $\ell_1$--norm in $\reals^m$.

There are some interesting special cases for the cost function $f$. We mention, for instance:
\begin{itemize}
\item $f(x)=\|Ax-y\|_2^2$, where $A$ is a matrix in $\reals^{n\times m}$. This sparse linear regression problem receives the name of LASSO \cite{meinshausen2009lasso,tibshirani1996regression}.
\item $f(x)=\|A^{-1}x-y\|_2^2+ \frac\alpha2 \| x \|^2_{2}$. This particular loss function appears in linear-quadratic PDE--constrained optimization problems after a discretization of the partial differential operator. The function also involves an additional  $\ell_2$ Tikhonov regularization term \cite{stadler09,casas2013sparse}.
\item $f(x)=-\frac{1}{N}\displaystyle\sum_{j=1}^N \log \dfrac{\exp(x_{y_j}^T z_j)}{\sum_{i\in C} \exp(x_i^Tz_j)}$. This function appears in voice recognition problems,  where $N$ is the number of samples used for the recognition, $C$ denotes the set of all class labels, $y_j$ the label associated to the training points $j$, $z_j$ is the feature vector and $x_i$ is the parameters' subvector of class label $i$. The function represents the normalized sum of the negative log likelihood of each data point being placed in the correct class \cite{collins2005discriminative,minka2003comparison}.
%\item $f(x)=\log\left(\phi\left(\dfrac{y_i x^\top z_i}{\sqrt{2}}\right)\right) $ where $\phi$ is an error function, $y$ the class labels, $z$ the features and $x$ the parameters \cite{lee2006efficient}.
\end{itemize}

Let us point out that our algorithm can also be used in presence of general regular functions $f$, without requiring convexity. This is of importance, in particular, for PDE-constrained optimization problems. Following \cite{chouzenoux2014} we will require the following conditions.
\begin{assumption}{\label{h:f}}
%	 Let $\bar x$ be the solution of problem \eqref{eq:P}, throughout this paper we will assume that $f$ is convex and satisfies the following condition: there exists positive constants $c$ and $C$ such that the following relation is satisfied
%\begin{equation}
%c \|d \|_2^2 \leq d^\top \nabla^2f(\bar x) d \leq C \|d \|_2^2, \text{ for all } d \in \reals^m.
% \label{eq:Hess_bounds}
%\end{equation}	
\hspace{1mm}
\begin{enumerate}[(i)]
	%\item $f: \reals^m \arrow ]-\infty, \infty]$ is proper, lower semicontinuous and convex and the restriction to its domain $\text{\textnormal{dom}} f$, is continuous.
\item $f: \reals^m \arrow \reals$ is continuously differentiable, with $\nabla f$ Lipschitz continuous for some constant $L>0$.
\item $\varphi = f + \beta \norm{\cdot}_1$ is coercive. %\nota{En que sentido coerciva?}
\end{enumerate}
\end{assumption}

It is easy to argue existence of an optimal solution $\bar x \in \reals^m$ for problem \eqref{eq:P} under Assumption \ref{h:f}. Moreover, it is well known that depending on the size of the parameter $\beta$, the solution $\bar x$ tends to be more or less sparse. In fact, if $f$ is convex and $\beta \geq \|\nabla f(0)\|_\infty$, then $\bar x$ is identically zero (see for instance \cite{stadler09}).

First-order optimality conditions for \eqref{eq:P} can be obtained using standard tools, and can be stated as $$0 \in \nabla f(\bar x)+ \partial (\beta \|\bar x\|_1),$$ where $\partial \phi (x)$ denotes the subdifferential of the function $\phi$ at $x$. Moreover, the last inclusion is equivalent to the relations:
\begin{subequations} \label{eq:fonc}
\begin{eqnarray}
0 =& \nabla_i f(\bar x) + \beta & \text{ for } i \in \bar{\mathcal P},\\
0 =& \nabla_i f(\bar x) -  \beta & \text{ for } i \in \bar{\mathcal N}, \\
0 \in& [\nabla_i f(\bar x) - \beta ,\nabla_i f(\bar x) +  \beta   ] & \text{ for } i \in \bar{\mathcal A},
\end{eqnarray}
\end{subequations}
where the index sets $ \bar{\mathcal P}$, $ \bar{\mathcal N}$ and $ \bar{\mathcal A}$ are defined as
\begin{align}
 \bar{\mathcal P}=\{i: \bar x_i>0 \}, \quad  \bar{\mathcal N}=\{i: \bar x_i<0\}, \quad \text{and } \bar{\mathcal A}=\{i: \bar x_i=0\}.
\end{align}

\section{The orthant-wise enriched second-order method (OESOM)}
In this section we present the main steps of the second-order algorithm we propose for solving~\eqref{eq:P}. Since our method is based on the use of orthant directions, let us start by introducing them in a formal manner. 

%For convenience of the reader, let us start by defining the following active and inactive subsets:
%\begin{align} \label{eq:idx_sets}
% \mathcal I&= \bar{\mathcal{P}} \cup \bar{\mathcal{N}},  \\
%  \mathcal{A}^-&=\bar{\mathcal{A}} \cap \{i:  \nabla_i f(u)<-\beta\},  \\
%  \mathcal{A}^+&=\bar{\mathcal{A}} \cap \{i:  \nabla_i f(u)>\beta\},  \,\text{and} \\
%   \mathcal{A}^0&=\bar{\mathcal{A}} \cap \{i: |\nabla_i f(u)|<\beta\},
%\end{align}

We define the \emph{orthant directions}  associated to a given vector $x \in \reals^m$ as follows:
\begin{equation}\label{eq:orthant_dir}
z_i (x)=
\left\{ \begin{array}{ll}
\hbox{sign}(x_i)& \hbox{if } x_i \not =0,\\
1&\hbox{if } x_i=0 \text{ and } \nabla_i f(x)<-\beta,\\ 
-1&\hbox{if }  x_i=0 \text{ and } \nabla_i f(x)>\beta,\\
0& \hbox{otherwise},
\end{array} \right.
\end{equation}
for $i \in I:=\{ 1,\dots,m \}$, the index set, and where $$\sign (x_i):= \begin{cases}1 &\text{if }x_i>0,\\0 &\text{if }x_i=0,\\-1 &\text{if }x_i<0. \end{cases}$$ These directions actually correspond to the minimum norm subgradient element \cite[Ch.~11]{sra2012optimization}.

A characterization of the orthant defined by $z(x)$ is given by:
\begin{equation}\label{eq:orthant}
\Omega:=\{d\colon \hbox{sign}(d)=\hbox{sign}(z(x))\}.
\end{equation}
%We shall use these directions in relation to the current iterate $x=x^k$. %In that case, we modify the definitions accordingly and use $\nabla \tilde\phi (u_i^k)$, $z_i^k$ and $\Omega_k$.
Following \cite{byrd011} we consider the direction
\begin{equation}\label{eq:stp_direc}
\widetilde{\nabla}_i \varphi (x)=\left\{ \begin{array}{ll}\nabla_i f(x) +\beta\hbox{sign}(x_i)&\hbox{if }  x_i \not =0,\\
\nabla_i f(x)+\beta&\hbox{if } x_i=0 \text{ and } \nabla_i f(x)<-\beta,\\
\nabla_i f(x)-\beta&\hbox{if } x_i=0 \text{ and } \nabla_i f(x)>\beta,  \\
0 & \hbox{otherwise},
\end{array}\right.
\end{equation}
which in the following will be called pseudo--gradient. It is worth notice that the pseudo--gradient $\widetilde\nabla\varphi(x)$ also belongs to $\nabla f(x)+ \partial (\|x\|_1)$. 

To ensure that the iterates remain in the orthant $\Omega$, an additional projection step is required. The corresponding orthogonal projection is defined as:
\begin{equation}\label{eq:projection}
\mathcal{P}(y)_i=\left\{\begin{array}{ll} y_i&\hbox{if }\hbox{sign}(y_i)=\hbox{sign}(z_i(x)),\\ 0&\hbox{otherwise}.\end{array}\right.
\end{equation}

The use of this type of directions in combination with second-order updates was originally proposed in \cite{andrgao07}. The resulting orthant-wise quasi-Newton matrix was constructed with the solely contribution of the regular part. In \cite{byrd011}, instead of using the whole Hessian-like matrix for computing the direction, an additional Newton subspace correction step is performed.  The efficiency of both methods was experimentally compared in \cite{byrd011}.

As mentioned in the introduction, although the $\ell_1$-norm is not differentiable in a classical sense, it is twice differentiable in a distributional sense (see, e.g., \cite{ciarlet2013linear}). The second distributional derivative is given by Dirac's delta function: $$\delta (x) =\begin{cases} + \infty & \text{if } x =0,\\ 0 & \text{otherwise.} \end{cases}$$ Since this weak derivative lives only at a single point, it is usually dismissed. In our case, however, we are precisely interested in getting a large number of zero entries in the solution vector and, therefore, this information may become valuable.

To make this second-order information usable, let us consider a Huber regularization of the $\ell_1$-norm given by $\| x \| _{1,\gamma}:=\sum_{i=1}^m h_\gamma (x_i) $, where 
\begin{equation}\label{e:huber}
h_\gamma(x_i)=
\begin{cases}
\gamma \frac{x_i^2}{2} & \hbox{if } |x_i|\leq \frac{1}{\gamma},\\
|x_i|- \frac{1}{2\gamma}&\hbox{if } |x_i|> \frac{1}{\gamma},
\end{cases}
\end{equation}
for $\gamma > 0$. The first partial derivatives are then given by 
\[
\nabla_ i \, \| x \| _{1,\gamma}=\dfrac{\gamma x_i}{\max(1,\gamma |x_i|)}, ~i=1, \dots, m,
\] 
i.e., the gradient of the Huber regularization is a vector whose components are described by the formula above.

Since the first derivative is a semismooth function (see Section 5 below), it has also possible to compute a generalized second derivative. The generalized Hessian is a diagonal matrix with entries given by: 
\begin{itemize}
\item[i)] If $\gamma|x_i|\leq 1$, then
$$
\Gamma_{ii}=\gamma.
$$
\item[ii)] If $\gamma|x_i|>1$, then
\[
\Gamma_{ii}=\dfrac{\gamma}{\gamma|x_i|}-\dfrac{\gamma^2 x_i^2 }{\gamma^2 x_i^2 |x_i|}=0.
\]
\end{itemize}
or, in a closed form, by
%\begin{equation*}
%\Gamma_{ij}=\dfrac{\partial}{\partial x_j}\left(\nabla_i \, \| x\|_{1,\gamma}\right)
%=\left\{\begin{array}{ll}0&\hbox{if } i\neq j,\\
%\dfrac{\gamma}{\max(1,\gamma|x_i|)}-\dfrac{\gamma^2 x_i \chi_i \hbox{sign}(x_i)}{\max(1,\gamma|x_i|)^2}&\hbox{otherwise,}\end{array}\right.
%\end{equation*}
%where
%\[
%\chi_i:=\left\{\begin{array}{ll} 0&\hbox{if } \gamma|x_i|<1,\\ 1&\hbox{otherwise,} \end{array}\right.
%\]
%corresponds to the generalized derivative of the max function.
%The entries of the diagonal matrix $\Gamma$ can then be rewritten in the following simplified form:
\begin{equation}\label{eq:gammak}
\Gamma_{ii}=\left\{\def\arraystretch{1.8}\begin{array}{ll}\gamma&\hbox{if }\gamma |x_i|\leq 1,\\
0&\hbox{elsewhere.}\end{array}\right.
\end{equation}
%The resulting matrix is a diagonal one, whose diagonal components has the following structure.

By including this matrix in the second order system, together with the orthant directions, we obtain the following enriched system:
\begin{equation}\label{eq:direction}
\left(B^k + \beta \Gamma^k\right) d^k = - \widetilde{\nabla}\varphi (x^k),
\end{equation}
where $B^k$ stands either for the Hessian of $f$ or a symmetric positive definite quasi-Newton approximation of it (e.g. the BFGS matrix).

\begin{remark} \label{rem:Bk}
Although several alternatives can be used, we consider a BFGS matrix $B^k$ in our algorithm. The BFGS update in our case is constructed according to the well known formula:
\begin{equation}\label{eq:BFGSBk}
B^{k+1} = B^k - \frac{B^k \delta^k {\delta^k}^\top B^k}{{\delta^k}^\top B^k \delta^k}	+ \frac{y^k {y^k}^\top}{{y^k}^\top \delta^k},
\end{equation}
where $y^k = \nabla f (x^{k+1}) - \nabla f (x^{k})$ and $\delta^k = x^{k+1} -x^k$. It should be noticed that there is a slightly difference with the classical BFGS method, since in our case the update of $x^{k+1}$ varies from the classic BFGS method. This, however, does not affect the main properties of $B_k$, which is positive definite by construction. 
%Moreover, it is know that $B_k$ is uniformly bounded. Indeed, the Powell's method from \cite{powell1971} {pg. 31--32}, applies to the BFGS matrix, regardless of the $x^k$'s used to compute $\delta^k$ and $y^k$, showing that the eigenvalues of $B^k$ are uniformly bounded.
\end{remark}

%By our assumptions on the function $f$, the BFGS matrix $B^{k}$  and the vector $\nabla \tilde \varphi (x^{k})$ are bounded. Therefore, equation  \eqref{eq:direction} implies the existence of some positive constant $C$ independent of $k$, such that
%%
%\begin{equation} \label{eq:mono_1}
%|d^{k}_{j}|=\l|\ds \frac{1}{B^{k}_{jj}+\gamma} \l( \sum_{\ell \not =j} B^{k}_{j\ell}d^{k}_{\ell}  +\nabla_{j} \tilde\varphi(x^{k})\r) \r| \leq \frac{C}{\gamma}
%\end{equation}
%%
% for those $j$ such that $|x^{k}_{j}| \leq 1/\gamma$.

%Therefore, we introduce the following regularized version of problem \eqref{eq:P} given by
%%
%\begin{equation} \label{eq:P_gamma}
%\min \quad\varphi_\gamma(u):=f(u) + \beta \| u \|_{\gamma}, \tag{$\mathbf{P}_\gamma$}
%\end{equation}
%%
%By Assumption 1, problem \eqref{eq:P_gamma} has a unique solution that will be denoted by $\bar u_\gamma$.
%%
%\begin{theorem}
% Let $\bar{u}$ and $\bar{u}_\gamma$  be the solutions of ${\mathbf{ (P)}}$ and ${\mathbf {(P_\gamma)}}$ respectively. Then, there exists a  subsequence of $\{\bar{u}_\gamma\}$ which converges to $\bar u$ in $\reals^m$.
%\end{theorem}
%

Similarly to \cite{andrgao07,byrd011}, we consider the projected line-search rule 
\begin{equation} \label{eq:line search}
\varphi[\mathcal{P}(x^k + s_k d^k)]\leq \varphi(x^ k) + \widetilde{\nabla} \varphi(x^k)^ T[\mathcal{P}(x^k+s_k d^k)-x^k], 
\end{equation}
used in a backtracking procedure for choosing $s_k$. 

\begin{assumption}\label{H:s_k}
There exist feasible line-search steps $s_k$, computed according to the line--search rule \eqref{eq:line search}, such that, for all $k \in \mathbb N$,
	\begin{equation} \label{eq:s_k}
		\hat s \leq s_k \leq 1, \qquad \text{for some } \hat s>0. 
	\end{equation}
	
\end{assumption}

The resulting algorithm is then given through the following steps.
\begin{algorithm}
\caption{Orthantwise Enriched Second Order Method \row}\label{OESOM}
\begin{algorithmic}[1]
\State Initialize  $x^0$ and $B^0$. %$B=\frac{1}{2} \mathbb{I}_{n\times n}$.
\Repeat
\State Compute the matrix $\Gamma^k$ using~\eqref{eq:gammak} .%$\partial^2 H_\gamma(x)=\dfrac{\gamma}{\max{\{1,\gamma|x|\}}}-\chi\cdot\dfrac{\gamma x q}{\left(\max\{1,\gamma|x|\}\right)^2 \max\{1,|q|\}}$
\State Compute the descent direction $\widetilde{\nabla}\varphi(x^k)$ using~\eqref{eq:stp_direc}.
\State Compute $d^k$ by solving the linear system~\eqref{eq:direction}.%$\left(B^k + \beta \partial^2 H_\gamma(u)\right) p^k = - \nabla f(x)=-\left( \nabla l (x) + \beta z^k\right)$
\State Compute $$x^{k+1}=\mathcal P(x^k + s_k d^k),$$ 
\indent with the line--search step $s_k$ computed by~\eqref{eq:line search}.
%\State Project $x^{k+1}$ onto the active orthant using \eqref{eq:projection}.
%\State $s_k=x^{k+1}- x^k$.
%\State $y_k=\nabla f\left(x^{k+1}\right)- \nabla f\left(x^{k}\right)$.
\State Update the matrix $B^k$.%$B^{k+1}=B^k + \dfrac{B^k s_k s_k^T B^k}{s_k^T B^k s_k}+  \dfrac{y_k y_k^T}{y_k^T s_k}$
\State $k \gets k+1$.
\Until{\hbox{stopping criteria is satisfied}}
\end{algorithmic}
\end{algorithm}

\section{Convergence Analysis}
%\begin{theorem} Suppose that the step size $\alpha_k$  is such that $\alpha_k \arrow 0$ and $\displaystyle\sum_{k=0}^\infty = + \infty$, then it follows that
%\begin{equation}
%\liminf 
%\end{equation}
%
%\end{theorem}
In this section we focus on the properties of the algorithm presented above. We verify that the directions of \row are in fact descent directions and prove convergence of the iterates.

\begin{assumption}\label{H:BFGS}	
\hspace{1mm}
%\begin{enumerate} 
%\item The smallest eigenvalue of $\nabla^2f(\cdot)$ is uniformly bounded from below by a positive constant. In particular $\nabla^2f(\bar x)$ is positive definite.
%\item There exists a constant $L>0$ such that $$\|\nabla^2f(x) -  \nabla^2f(x)\| \leq L \| x-\bar x \|,$$ for all $x$  belonging to the level set $\{y: f(y) \leq f(x^0) \}$.
%\end{enumerate}
Let $B^k$ be a symmetric approximation of the Hessian of $f$ in the $k$--th iteration. There exist positive constants $\hat c$ and $ \hat C$ such that the following relation is satisfied:
\begin{equation}
\hat c \|d \|_2^2 \leq d^\top B^k d \leq \hat C \|d \|_2^2, \qquad \text{ for all } d \in \reals^m.
 \label{eq:Bk_bounds}
\end{equation}
\end{assumption}

From the assumptions on $f$ and $B^{k}$, the second-order matrices $B^{k}$ is bounded independent of $k$ so that \eqref{eq:direction} implies that $\norm{\widetilde\nabla\varphi(x^k)}_2 \leq c \norm{d^k} $ for some constant $c$. In addition, from \eqref{eq:direction} we observe that for every $i$ such that $|x^k_i| \leq \frac{1}{\gamma}$, we have
$$ \sum_{\ell=1}^{m} ({B^k_{\ell j} + \beta \Gamma_{\ell j}}) d_j = (B^k_{jj}+\beta \gamma) d_j + \sum_{\ell \not = j}^{m} B_{\ell j} d_j= - \widetilde{\nabla}_i \varphi (x^k),$$
which leads to the bound: %
\begin{equation} \label{eq:mono_1}
|d^{k}_{j}|=\l|\ds \frac{1}{B^{k}_{jj}+\beta \gamma} \l( \sum_{\ell \not =j} B^{k}_{j\ell}d^{k}_{\ell}  +\widetilde \nabla_{j} \varphi(x^{k})\r) \r| \leq \frac{C}{\gamma} \norm{d}_2,
\end{equation}
for those $j$ such that $|x^{k}_{j}| \leq 1/\gamma$.

%Since the sparse structure of the solution is one of the main issues of this type of optimization problems, we characterize the null components of the solution; which in turn, is related to the orthant direction $z^k$. Therefore, we make the following definition:

At the $k$--th step of the algorithm, we define the strong active set by
\begin{equation}\label{eq:est004}
\mathcal{S}_k :=\{i \colon z_i^k=0\}.
\end{equation}
We recall that $z_i^k$ depends on $x^k$. Hereafter, however, in order to simplify the notation, we avoid writing this dependence explicitely. From the definition of $\mathcal S_k$ it follows that, if $i\in \mathcal{S}_k$ then $x_i^k=0$. The reciprocal is not always true.
Moreover, we define the following index set of components of the current solution that remain in the orthant defined by $z^{k}$:
\begin{equation}\label{eq:Hk}
\mathcal H^k:=\{i \colon \hbox{sign}(x_i^k + s_k d_i^k)=\hbox{sign}(z_i^k)\}.
\end{equation} 
With this definition  we can express the $(k+1)$ iterate with help of the following update formula:
\begin{equation*}\label{eq:iteration_proj}
x_i^{k+1}=\mathcal{P}[x_i^k + s_k d_i^k]= x_i^k + \tilde{d}_i^k,
\end{equation*}
where
\begin{equation}\label{eq:dtilde}
\tilde{d}_i^k=\left\{\begin{array}{ll}
s_k d_i^k&\hbox{if } i\in  \mathcal{H}^{k},\\
-x_i^k&\hbox{elsewhere.}
\end{array}\right.
\end{equation}

%%%%%%%%%%%%%%
%Descent direction analysis
%%%%%%%%%%%%%%

%
%In the following, we argue that $\tilde d$ is a descent direction. We start by proving the following result.
\begin{lemma}\label{l:l2}
The orthant direction $z^k$ defined in \eqref{eq:orthant_dir} and the direction $\tilde d^k$ satisfy 
\begin{equation}
(z^k)^{\top}\tilde d^k =\|{x^k +\tilde d^k}\|_{1}-\norm{x^k}_{1}.
\end{equation}
%where $z\in \partial \norm{\cdot}_{1}$ is defined by \eqref{eq:orthant_dir}.
\end{lemma}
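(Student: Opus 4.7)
The plan is to establish the stronger pointwise identity $z_i^k x_i = |x_i|$ for any vector $x$ whose sign pattern is compatible with the orthant $\Omega$ defined by $z^k$, and then apply it to both $x^k$ and $x^k+\tilde d^k$, subtracting the two resulting expressions.

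First I would unwind the definition of $z^k$ in \eqref{eq:orthant_dir}: whenever $z_i^k\neq 0$ we have $z_i^k = \mathrm{sign}(x_i^k)$ provided $x_i^k\neq 0$, while $z_i^k = 0$ forces $x_i^k = 0$ (this is the ``otherwise'' branch, since $z_i^k\neq 0$ is always assigned when $x_i^k\neq 0$). In either case one checks directly that $z_i^k x_i^k = |x_i^k|$, and therefore $(z^k)^\top x^k = \|x^k\|_1$.

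The next step is the analogous identity for $x^k+\tilde d^k$. Here I would split the index set according to \eqref{eq:dtilde}. For $i\in\mathcal H^k$, the definition of $\mathcal H^k$ gives $\mathrm{sign}(x_i^k+s_k d_i^k)=\mathrm{sign}(z_i^k)$, hence $z_i^k(x_i^k+\tilde d_i^k) = |x_i^k+\tilde d_i^k|$ (the case $z_i^k=0$ forces $x_i^k+\tilde d_i^k=0$, so both sides vanish). For $i\notin\mathcal H^k$ we have $\tilde d_i^k = -x_i^k$, so $x_i^k+\tilde d_i^k=0$ and the identity holds trivially. Summing over $i$ yields $(z^k)^\top(x^k+\tilde d^k) = \|x^k+\tilde d^k\|_1$.

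Subtracting the two identities gives
\begin{equation*}
(z^k)^\top \tilde d^k = (z^k)^\top(x^k+\tilde d^k) - (z^k)^\top x^k = \|x^k+\tilde d^k\|_1 - \|x^k\|_1,
\end{equation*}
which is the claim. The only mildly delicate point, and thus the main thing to handle carefully, is the case $z_i^k=0$: one must verify that the strong active set condition indeed forces $x_i^k=0$ and, via the projection step encoded in $\tilde d^k$, also $x_i^k+\tilde d_i^k=0$, so that the boundary components never contribute spurious terms on either side.
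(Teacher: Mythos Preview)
Your proof is correct and follows essentially the same componentwise argument as the paper: both rely on the identity $z_i^k y_i = |y_i|$ whenever $\mathrm{sign}(y_i)=\mathrm{sign}(z_i^k)$, applied to $y=x^k$ and $y=x^k+\tilde d^k$. The only difference is organizational---you establish the two inner-product identities separately and subtract, whereas the paper computes the single expression $\|x^k+\tilde d^k\|_1-\|x^k\|_1-(z^k)^\top\tilde d^k$ directly and shows it vanishes term by term.
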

\begin{proof}
By definition~\eqref{eq:dtilde} we obtain that
\begin{align*}
\|{x^k+\tilde d^k}\|_{1}&-\norm{x^k}_{1} -(z^k)^{\top}\tilde d^k \\
&= \sum_{i} \l( |x^k_{i}+\tilde d^k_{i}|-|x^k_i| \r) - \sum_{i} z^k_{i}\tilde d^k_{i} \\
&=\sum_{i\in \mathcal{H}^{k}} \l( |x^k_{i}+s_k d^k_{i}|-|x^k_i| -z^k_{i} s_k d^k_{i}\r) - \sum_{i\not \in \mathcal{H}^{k}}( |x^k_{i}|-z^k_{i}x^k_{i} )\\
&=\sum_{i\in \mathcal{H}^{k}} \l( \text{sign}(z^k_{i})(x^k_{i}+s_k d^k_{i})-|x^k_i| -z^k_{i}s_k d^k_{i}\r) %- \sum_{i\not \in H^{k}} |x_{i}|-z_{i}x_{i}
\\
&=\sum_{i\in \mathcal{H}^{k}} \l( z^k_{i} (x_{i}+s_k d^k_{i})-|x^k_i| -z^k_{i}s_k d^k_{i}\r)  =\sum_{i\in\mathcal{H}^{k}} z^k_{i}x^k_{i} -|x^k_{i}|=0
\end{align*}
\end{proof}

%In order to proof that our algorithm generates descent directions, we will need the following assumption used in the proof of convergence of the classical BFGS method. Furthermore, this assumption can be satisfied if positive definiteness and Lipschitz continuity of the Hessian is assumed (see \cite[Theorem 3]{powell1971}). 

\begin{remark}
If $f$ is twice continuously differentiable with a  positive definite and Lipschitz continuous Hessian, then Assumption \ref{H:BFGS} is satisfied.  
The sequence of matrices $\{B^k\}$ approximating the Hessian can be constructed, for instance, with the BFGS method, which leads to symmetric and positive definite matrices, if the curvature condition is satisfied. We point out that the next results are valid for any positive definite approximation of the Hessian engendered by some quasi--Newton method satisfying Assumption \ref{H:BFGS}. 	
%In the case that a Newton method is applied, with $B^k := \nabla^2 f (x^k)$ satisfying the positive definite property, the results below are valid without Assumption \ref{H:BFGS}. 	
\end{remark}

\begin{proposition} \label{l:l1}
Let $x^{k}$ be the $k$--th iterate of the algorithm and $z^{k}$ its associated orthant direction. Let $\gamma$ be  sufficiently large and $s_k$ sufficiently small such that
\begin{equation}\label{eq:desc_0}
\text{\textnormal{sign}}(x^k_i+s_k d^k_i) = \text{\textnormal{sign}}(x^k_i), \text{ for all }i: \, |x^k_i| \geq \frac{1}{\gamma}.
\end{equation}
Then, under Assumptions \ref{H:s_k} and \ref{H:BFGS} we have that
\begin{equation}{\label{eq:grad_phi}}
\widetilde \nabla \varphi(x^k) ^\top  \tilde d^k <  - \l( \frac{ \hat c\hat s}{2} -  s_k \, \mathcal O \l(\gamma^{-2}\r)\r)\|d^k\|_2^2 -\hat c \|\tilde d^k\|_2^2.
\end{equation}
In particular, if  $s$ is bounded and $\gamma$ is sufficiently large there exists a constant $ \mu>0 $, independent of $k$, such that
\begin{equation}\label{eq:grad_phi.2}
\widetilde \nabla \varphi(x^k) ^\top  \tilde d^k \leq  -{\mu} \left( \|d^k\|_2^2+ \|\tilde d^k\|_2^2 \right).	
\end{equation}
\end{proposition}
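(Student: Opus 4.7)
The plan is to rewrite the inner product using \eqref{eq:direction} and then exploit the fact that $\tilde d^k$ is close to $s_k d^k$ on the index set where the Huber diagonal is active. Multiplying the linear system by $\tilde d^k$ and using symmetry of $B^k$ and $\Gamma^k$ gives
\[
\widetilde \nabla \varphi(x^k)^\top \tilde d^k = -(d^k)^\top (B^k + \beta \Gamma^k)\tilde d^k,
\]
so the whole task reduces to bounding $(d^k)^\top(B^k+\beta\Gamma^k)\tilde d^k$ from below. I would introduce the switching set $A := I \setminus \mathcal H^k$ and the decomposition $\tilde d^k = s_k d^k + \delta^k$, where $\delta^k$ is supported on $A$ with $\delta^k_i = -x^k_i - s_k d^k_i$.

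The crucial localization is that hypothesis \eqref{eq:desc_0} forces $A \subseteq \{i : |x^k_i| < 1/\gamma\}$. Hence $\Gamma^k_{ii}=\gamma$ on $A$ and \eqref{eq:mono_1} gives $|d^k_i| \leq (C/\gamma)\|d^k\|_2$ there. A short sign-flip argument---for $i \in A$ with $x^k_i \neq 0$, $x^k_i$ and $x^k_i+s_k d^k_i$ must lie on opposite sides of zero---yields $|x^k_i| \leq s_k |d^k_i|$ and therefore $|\delta^k_i| \leq s_k|d^k_i|$, so
\[
\|\delta^k\|_2 \leq s_k \sqrt{m}\, (C/\gamma)\, \|d^k\|_2 = \mathcal O(s_k/\gamma)\|d^k\|_2.
\]
The same case analysis, together with $\tilde d^k_i = s_k d^k_i$ on $\mathcal H^k$, shows that $d^k_i\tilde d^k_i \geq 0$ whenever $\Gamma^k_{ii}\neq 0$: the Huber cross-term is non-negative, $(d^k)^\top \Gamma^k \tilde d^k \geq 0$, and may be dropped, leaving $\widetilde\nabla\varphi(x^k)^\top \tilde d^k \leq -(d^k)^\top B^k \tilde d^k$.

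For the remaining quadratic I would use the polarization identity together with the complementary decomposition $d^k = s_k^{-1}\tilde d^k + w^k$, where $w^k$ is also supported on $A$ and satisfies $\|w^k\|_2 = \mathcal O(\gamma^{-1})\|d^k\|_2$ by the same sign analysis. Averaging the two representations of $(d^k)^\top B^k \tilde d^k$ and applying Assumption \ref{H:BFGS} (using $s_k \geq \hat s$ in one branch and $s_k^{-1}\geq 1$ in the other) produces the two coercive contributions $-\tfrac{\hat c \hat s}{2}\|d^k\|_2^2$ and $-\hat c\|\tilde d^k\|_2^2$. The residual is a sum of cross-terms of the type $(d^k)^\top B^k \delta^k$ and $(w^k)^\top B^k \tilde d^k$; bounding them via Cauchy--Schwarz together with $\|\delta^k\|_2 = \mathcal O(s_k/\gamma)\|d^k\|_2$ and $\|w^k\|_2 = \mathcal O(\gamma^{-1})\|d^k\|_2$ gains two factors of $\gamma^{-1}$ and collapses to the advertised $s_k\,\mathcal O(\gamma^{-2})\|d^k\|_2^2$ term, giving \eqref{eq:grad_phi}.

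The hard part is the bookkeeping in the last paragraph: one has to allocate the coercivity so that both $\|d^k\|_2^2$ and $\|\tilde d^k\|_2^2$ emerge with their stated coefficients, and verify that pairing the two small quantities $\delta^k$ and $w^k$ genuinely produces a $\gamma^{-2}$ residual rather than merely a $\gamma^{-1}$ one. Once \eqref{eq:grad_phi} is established, \eqref{eq:grad_phi.2} is immediate: for $\gamma$ large enough the factor $\tfrac{\hat c\hat s}{2} - s_k\,\mathcal O(\gamma^{-2})$ is bounded below by a positive constant independent of $k$, and setting $\mu := \min\bigl\{\tfrac{\hat c\hat s}{2} - s_k\,\mathcal O(\gamma^{-2}),\,\hat c\bigr\}>0$ delivers the combined coercivity estimate.
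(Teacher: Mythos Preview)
Your overall strategy is sound and closely related to the paper's, but there is a genuine gap at exactly the point you flag as ``the hard part''. Bounding the two cross-terms $(d^k)^\top B^k\delta^k$ and $(w^k)^\top B^k\tilde d^k$ separately by Cauchy--Schwarz does \emph{not} gain two factors of $\gamma^{-1}$: in $(d^k)^\top B^k\delta^k$ only $\delta^k$ is small, so you get $\hat C\|d^k\|_2\|\delta^k\|_2=\mathcal O(s_k/\gamma)\|d^k\|_2^2$, and similarly for the other term (using $\|\tilde d^k\|_2\le s_k\|d^k\|_2$). Each is $\mathcal O(\gamma^{-1})$, not $\mathcal O(\gamma^{-2})$, so as written your argument proves only a weaker version of \eqref{eq:grad_phi}. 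The missing observation is that your two ``small'' vectors are not independent: from $\tilde d^k=s_kd^k+\delta^k$ and $d^k=s_k^{-1}\tilde d^k+w^k$ one reads off $\delta^k=-s_kw^k$, and then, by symmetry of $B^k$,
\[
(d^k)^\top B^k\delta^k+(w^k)^\top B^k\tilde d^k=-s_k(d^k)^\top B^k w^k+s_k(w^k)^\top B^k d^k-s_k(w^k)^\top B^k w^k=-s_k(w^k)^\top B^k w^k,
\]
which is nonpositive and of size $\le s_k\hat C\|w^k\|_2^2=\mathcal O(s_k/\gamma^{2})\|d^k\|_2^2$. This rescues the $\mathcal O(\gamma^{-2})$ claim. (Note also that straight averaging yields $-\tfrac{\hat c}{2}\|\tilde d^k\|_2^2$, not $-\hat c\|\tilde d^k\|_2^2$; this is harmless for \eqref{eq:grad_phi.2}.)

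For comparison, the paper avoids the pairing issue altogether by expanding $\hat c\|\tilde d^k\|_2^2\le(\tilde d^k)^\top B^k\tilde d^k$ with the single decomposition $\tilde d^k=s_kd^k-\hat d^k$ (your $\hat d^k=-\delta^k$); rearranging gives an upper bound on $(d^k)^\top B^k\hat d^k$ whose only residual is the purely quadratic term $\tfrac{1}{2s_k}(\hat d^k)^\top B^k\hat d^k$, automatically $\mathcal O(s_k/\gamma^{2})\|d^k\|_2^2$. The paper also keeps the Huber pieces $-s_k\beta\,(d^k)^\top\Gamma^k d^k$ and $\beta\,(d^k)^\top\Gamma^k\hat d^k$ and cancels them against each other, whereas your observation that $d_i^k\tilde d_i^k\ge0$ on $\{\Gamma^k_{ii}\neq0\}$ lets you discard the Huber cross-term in one clean step; that part of your argument is correct and arguably tidier.
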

\begin{proof} Let us simplify our presentation by dropping off the superindex $k$ on the variables generated at the $k$--th iteration. In this way $d$ and $\tilde d$ denote the vectors defined by \eqref{eq:direction} and \eqref{eq:dtilde}, respectively. In addition, let us denote by $\hat d$ the vector whose components are given by
\begin{equation}\label{eq:dhat}
\hat d_i =	
	\l\{
	\begin{array}{ll}
	x_i + s d_i, & \text{ if } i \not \in \mathcal{H}^k, \\
	0, 			& \text{ otherwise},
	\end{array}
	\r.
\end{equation}
with $\mathcal H^k$ defined by \eqref{eq:Hk}. Therefore, we can write $\tilde d = sd - \hat d $. 

From the positive definiteness of $B$ we obtain the inequality:  
\begin{equation}
	\hat c \|\tilde d\|_2^2 \leq {\tilde d}^\top B \tilde d = (sd -\hat d)^\top B (sd -\hat d) = s^2 d^\top B d - 2s d^\top B \hat d + {\hat d}^\top B \hat d.
\end{equation}
Taking the second term to the left--hand side and dividing by $2s$, we get 
\begin{equation} \label{eq:desc_1}
 d^\top B \hat d \leq \frac{s}{2} d^\top B d + \frac1{2s} {\hat d}^\top B \hat d	- \hat c \|\tilde d\|_2^2.
\end{equation}
Now we proceed to analyze the quantity $\tilde \nabla \varphi(x)^{\top} \tilde d $. For this, we take into account equation \eqref{eq:direction} and obtain:
\begin{align}
	\widetilde \nabla \varphi(x)^{\top} \tilde d & =  - d^\top (B + \beta \Gamma) \, \tilde d \nonumber \\
											 & =  - d^\top (B + \beta \Gamma) \, (sd - \hat d) \nonumber \\
											 & =  - s d^\top (B + \beta \Gamma) d +  d^\top B \,\hat d	  + \beta  d^\top \Gamma \, \hat d.
\end{align}
Applying \eqref{eq:desc_1} and Assumption \ref{H:BFGS}, we get that
\begin{align} \label{eq:desc_2}
	\widetilde \nabla \varphi(x)^{\top} \tilde d & \leq - s d^\top (B + \beta \Gamma) d + \frac{s}{2} d^\top B d + \frac1{2s} {\hat d}^\top B \hat d	+ \beta d^\top \Gamma \, \hat d -\hat c \|\tilde d\|_2^2	\nonumber\\
		& = - \frac{s}{2} d^\top B  d - s \beta \, d^\top \Gamma d + \frac1{2s} {\hat d}^\top B \hat d	+ \beta d^\top \Gamma \, \hat d  -\hat c \|\tilde d\|_2^2 \nonumber \\
		& \leq - \hat c \frac{s}{2} \|d\|_2^2 - s \beta \, d^\top \Gamma d + \frac1{2s} {\hat d}^\top B \hat d	+ \beta d^\top \Gamma \, \hat d -\hat c \|\tilde d\|_2^2.
\end{align}
In the last inequality, the second term on the right--hand side is non-positive in view of positive semi-definiteness of $\Gamma$. Let us analyze the third term in \eqref{eq:desc_2}. It is clear that this term is positive and, by definition, the components of $\hat d$ are those that change its sign with respect to the current orthant. Therefore, 
\begin{equation} \label{eq:dem pro descent dire sign cond}
|x_i + sd_i| \leq s |d_i| \text{ for } i \not \in \mathcal H^k,
\end{equation} 
since $\hbox{sign}(x_i)=\hbox{sign}(z_i)$ for $x_i \not = 0$.

Therefore, by the definition of $\hat d$ in \eqref{eq:dhat} and Assumption \ref{H:BFGS}, we have that
\begin{align}
	\frac1{2s} {\hat d}^\top B \hat d	&\leq \frac{\hat C}{2s} \norm{\hat d}_2^2 \nonumber \\
	& \leq  \frac{\hat C}{2s} \sum_{i\not \in \mathcal H^k} s^2|d_i|^2 \nonumber\\
	&= \frac{s}{2} \hat C\Big( \sum_{i: i\not \in \mathcal H^k, |x_i| \leq \frac{1}{\gamma} } |d_i|^2 +  \sum_{i: i\not \in \mathcal H^k, |x_i| > \frac{1}{\gamma} } |d_i|^2 \Big) \label{eq:desc_3}.
\end{align}
Now, by taking into account inequality \eqref{eq:mono_1} we get
\begin{align}
	\frac1{2s} {\hat d}^\top B \hat d \leq \frac{s}{2} \hat C \sum_{i: i\not \in \mathcal H^k, |x_i| > \frac{1}{\gamma} } |d_i|^2 +  s \,\mathcal O \l(\frac{1}{\gamma^2}\r) \norm{d}_2^2. \label{eq:desc_4}
\end{align}
By similar arguments, thanks to \eqref{eq:dem pro descent dire sign cond} and since $\hat d_i =0 $ if $i\in \mathcal H_k$, we get that
\begin{align}
\beta d^\top \Gamma \, \hat d \leq \beta \gamma \sum_{i: i\not \in \mathcal H^k, |x_i| \leq \frac{1}{\gamma} } d_i (x_i+s d_i) \leq s\beta \gamma  \sum_{i: i\not \in \mathcal H^k, |x_i| \leq \frac{1}{\gamma} } |d_i|^2 \label{eq:desc_5}.
\end{align}
By plugging \eqref{eq:desc_4} and \eqref{eq:desc_5} in \eqref{eq:desc_2}, we arrive at 
\begin{align} \label{eq:desc_6}
	\widetilde \nabla \varphi(x)^{\top} \tilde d &\leq  - \hat c \frac{s}{2} \|d\|_2^2  + \frac{s}{2} \hat C \sum_{i: i\not \in \mathcal H^k, |x_i| > \frac{1}{\gamma} } |d_i|^2 -\hat c \|\tilde d\|_2^2 +  s \, \mathcal O \l(\gamma^{-2}\r)\norm{d}_2^2  \nonumber \\
	& \leq  - \l( \frac{ \hat c\hat s}{2} -  s \, \mathcal O \l(\gamma^{-2}\r)\r)\|d\|_2^2 -\hat c \|\tilde d\|_2^2 ,
	\end{align}
where the last inequality follows from Assumption \ref{H:s_k} and taking into account that \eqref{eq:desc_0} implies that the set $  \{ i: i\not \in \mathcal H^k, |x_i| > \frac{1}{\gamma} \}$ becomes empty. Our assertion is obtained by taking $\gamma$ sufficiently large.
\end{proof}

\begin{remark}
Condition \eqref{eq:desc_0} can be directly satisfied by choosing $$s_k < \min_{i\not \in \mathcal{H}^{k}, |x_i^k|>1/\gamma}\l | \frac{x^k_{i}}{d^k_{i}} \r|.$$
%We make the following observation: if for every $i \not \in \mathcal{H}^{k}$ and $i\in \mathcal{I}$ is such that $|x_i|<1/gamma$, then \eqref{eq:mono_1} holds and from the proof of the last lemma, $s$ if chosen such that
%\begin{align}
%s \leq  \min_{i\not \in \mathcal{H}^{k}}\l | \frac{x_{i}}{d_{i}} \r| \leq \frac{C}.{\gamma^{2}}
%\end{align}
%From this inequality we can observe the effect of $\gamma$; indeed, if $\gamma$ is large then the selection of $s$ can be relaxed to obtain descent directions.
%
\end{remark}

\begin{theorem} \label{T:descent}
Let us assume that Assumption \ref{H:BFGS} and condition \eqref{eq:desc_0} hold true. If the direction $\tilde d^k$ defined by \eqref{eq:dtilde} is different from $0$, then it satisfies
\begin{equation}
\varphi(x^k+\tilde d^k)<\varphi{(x^k)},
\end{equation}
for a sufficiently small step size $s_k$.
\end{theorem}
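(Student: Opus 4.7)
My plan is to bound $\varphi(x^k+\tilde d^k)-\varphi(x^k)$ from above by combining the Lipschitz descent lemma for $f$, Lemma~\ref{l:l2} for the $\ell_1$-increment, and the descent estimate of Proposition~\ref{l:l1}. Starting from
\begin{equation*}
\varphi(x^k+\tilde d^k)-\varphi(x^k)=\bigl[f(x^k+\tilde d^k)-f(x^k)\bigr]+\beta\bigl[\|x^k+\tilde d^k\|_1-\|x^k\|_1\bigr],
\end{equation*}
Assumption~\ref{h:f}(i) supplies $f(x^k+\tilde d^k)-f(x^k)\leq\nabla f(x^k)^\top\tilde d^k+\tfrac{L}{2}\|\tilde d^k\|_2^2$, while Lemma~\ref{l:l2} rewrites the $\ell_1$-increment as $(z^k)^\top\tilde d^k$. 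These two estimates combine into
\begin{equation*}
\varphi(x^k+\tilde d^k)-\varphi(x^k)\leq\bigl(\nabla f(x^k)+\beta z^k\bigr)^\top\tilde d^k+\tfrac{L}{2}\|\tilde d^k\|_2^2.
\end{equation*}

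The crucial step is to replace $\nabla f(x^k)+\beta z^k$ by the pseudo-gradient $\widetilde\nabla\varphi(x^k)$ inside this inner product. A direct comparison of \eqref{eq:orthant_dir} and \eqref{eq:stp_direc} shows that the two vectors coincide componentwise outside the strong active set $\mathcal S_k=\{i:z_i^k=0\}=\{i:x_i^k=0,\,|\nabla_i f(x^k)|\leq\beta\}$, whereas on $\mathcal S_k$ one has $\widetilde\nabla_i\varphi(x^k)=0$ but $(\nabla f(x^k)+\beta z^k)_i=\nabla_i f(x^k)$. A short case analysis then forces $\tilde d_i^k=0$ for every $i\in\mathcal S_k$: if $i\in\mathcal H^k$, the requirement $\sign(x_i^k+s_kd_i^k)=\sign(z_i^k)=0$ together with $x_i^k=0$ gives $d_i^k=0$ and hence $\tilde d_i^k=0$; if $i\notin\mathcal H^k$, definition \eqref{eq:dtilde} yields $\tilde d_i^k=-x_i^k=0$ directly. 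Consequently $(\nabla f(x^k)+\beta z^k)^\top\tilde d^k=\widetilde\nabla\varphi(x^k)^\top\tilde d^k$, and Proposition~\ref{l:l1} (applied with $\gamma$ large enough and $s_k$ satisfying \eqref{eq:desc_0}) delivers
\begin{equation*}
\widetilde\nabla\varphi(x^k)^\top\tilde d^k\leq -\mu\bigl(\|d^k\|_2^2+\|\tilde d^k\|_2^2\bigr).
\end{equation*}

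Plugging everything together yields
\begin{equation*}
\varphi(x^k+\tilde d^k)-\varphi(x^k)\leq-\mu\|d^k\|_2^2-\Bigl(\mu-\tfrac{L}{2}\Bigr)\|\tilde d^k\|_2^2,
\end{equation*}
which is already strictly negative if $\mu\geq L/2$. When $\mu<L/2$, the elementary bound $\|\tilde d^k\|_2^2\leq s_k^2\|d^k\|_2^2$ (which follows from \eqref{eq:dtilde} together with $|x_i^k|\leq s_k|d_i^k|$ for $i\notin\mathcal H^k$, exactly as inside the proof of Proposition~\ref{l:l1}) upgrades the right-hand side to $-\bigl(\mu-\tfrac{L-2\mu}{2}s_k^2\bigr)\|d^k\|_2^2$, which is still strictly negative for all sufficiently small $s_k$. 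Finally, the hypothesis $\tilde d^k\neq 0$ implies $d^k\neq 0$, since $d^k=0$ would force $\widetilde\nabla\varphi(x^k)=0$ through \eqref{eq:direction}, and then the projection rule \eqref{eq:projection} would return $\tilde d^k=0$. The main obstacle is the componentwise identification made in the second paragraph: the vectors $\nabla f+\beta z$ and $\widetilde\nabla\varphi$ do genuinely differ on $\mathcal S_k$, so one must exploit the interplay between the projection, the orthant indicator $z^k$, and the strong active set to ensure that the discrepancy disappears when paired with $\tilde d^k$; once this is in place the rest is a routine combination of the descent lemma, Lemma~\ref{l:l2}, and Proposition~\ref{l:l1}.
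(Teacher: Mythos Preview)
Your proof is correct and follows essentially the same route as the paper: both combine Lemma~\ref{l:l2} for the $\ell_1$-increment, the identification $\tilde d_i^k=0$ on $\mathcal S_k$ (which the paper carries out at the end rather than upfront), and Proposition~\ref{l:l1}. The only notable difference is that you control the remainder via the Lipschitz descent lemma $\tfrac{L}{2}\|\tilde d^k\|_2^2$ and then absorb it using $\|\tilde d^k\|_2^2\le s_k^2\|d^k\|_2^2$, whereas the paper uses the first-order Taylor remainder $o(\|\tilde d^k\|_2)=o(s_k)$; your version is slightly more quantitative but otherwise equivalent.
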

\begin{proof}
Using Lemma \ref{l:l2} and Proposition \ref{l:l1}, and using a Taylor expansion of $f$ at $x^k$ we get
\begin{align}
\varphi(x^k+\tilde d^k)&=f(x^k+\tilde d^k) +\beta \norm{x^k+\tilde d^k}_{1} \nonumber \\
		   &=f(x^k)+ \nabla f(x^k) ^\top\tilde d^k + {o}(\norm{\tilde d^k}_2) +\beta  \norm{x^k+\tilde d^k}_{1} \nonumber\\
		   &=\varphi(x^k) +  \nabla f(x^k) ^\top\tilde d^k + \beta (\norm{x^k+\tilde d^k}_{1}-\norm{x^k}_{1}) + o(\norm{\tilde d^k}_2)\nonumber \\
		   &=\varphi(x^k) +  \nabla f(x^k) ^\top\tilde d^k + \beta z^{\top}\tilde d^k + o(\norm{\tilde d^k}_2) \nonumber\\
		   &=\varphi(x^k) + ( \nabla f(x^k)+ \beta z)^{\top}\tilde d^k + o(\norm{\tilde d^k}_2). \nonumber
%		   &<\varphi(x^k) +o(\norm{\tilde d^k}_2).
\end{align}
Considering \eqref{eq:stp_direc} we get
\begin{align}
\varphi(x^k+\tilde d^k)&=\varphi(x^k) +  \tilde \nabla \varphi (x^k)^{\top}\tilde d^k + \sum_{i:\, x_i=0, \, |\nabla f (x_i) | \leq \beta} \nabla_i f (x_i)  \tilde d_i^k+ o(\norm{\tilde d^k}_2)\nonumber\\
	& \leq \varphi(x^k) +  \tilde \nabla \varphi (x^k)^{\top}\tilde d^k + \beta \sum_{i:\, x_i=0, \, |\nabla f (x_i) | \leq \beta} |\tilde d_i^k| + o(\norm{\tilde d^k}_2).  \label{eq:desc_7}
%		   &<\varphi(x^k) +o(\norm{\tilde d^k}_2).
\end{align}

By definition of $\tilde d^k$, we have that $\tilde d_i^k = s_k d_i^k$ for  $i \in \mathcal H^k$. If $i \not \in \mathcal H^k$ and $x_i^k=0$, then $\tilde d_i^k =-x_i^k =0$. In the case that $i \not \in \mathcal H^k$ and $x_i^k \not =0$ we use again  \eqref{eq:dem pro descent dire sign cond}. Altogether, we get that $|\tilde d_i^k| \leq s_k |d_i^k|$, for all $i$. Consequently, $o (\norm{\tilde d^k}_2)  = o (|s_k|)$ since $d^k$ is bounded. Therefore, in view of  Assumption  \ref{H:BFGS}, together with \eqref{eq:desc_7} and \eqref{eq:grad_phi.2}, we can estimate

\begin{align}\label{eq:desc_9}
\varphi(x^k+\tilde d^k)&\leq \varphi(x^k)  - { \mu} \left( \|d^k\|_2^2+ \|\tilde d^k\|_2^2 \right) + \beta s_k \sum_{i:\, x_i=0, \, |\nabla f (x_i) | \leq \beta} | d_i| + o (s_k).
%		   &<\varphi(x^k) +o(\norm{\tilde d^k}_2).
\end{align}
%
%Finally, \eqref{eq:mono_1} implies
Finally, we notice that in the set of indexes $i$ where $x_i^k=0$ and $|\nabla f (x_i^k) | \leq \beta$ we get that $z_i^k = 0$. Therefore, if $i\in \mathcal H^k_i$, then $\text{sign} (x_i^k +s_k d_i^k) = \text{sign} (s_k d_i^k) = \text{sign} (z_i^k) = 0$. On the other hand, if $i\not \in \mathcal H^k_i$ then $\tilde d_i^k=-x_i^k=0$, because of the definition of $\tilde d_i^k$. By this analysis we conclude that the sum term in \eqref{eq:desc_9} vanishes and we get the estimate
\begin{equation}\label{eq:desc_8}
	\varphi(x^k+\tilde d^k)\leq \varphi(x^k)  - { \mu} \left( \|d^k\|_2^2+ \|\tilde d^k\|_2^2 \right) + o (s_k).
\end{equation}
This shows that 
 $\varphi(x^k+\tilde d^k)<\varphi{(x^k)}$ for $s_k$ sufficiently small and proves that $\tilde d^k$ is a descent direction. 
%If $d = 0$ then $\norm{ \widetilde \nabla \varphi (x^k) } \leq c \norm{d}$ by \eqref{eq:direction} and boundedness of $B^k$. This shows that $\widetilde \nabla \varphi (x^k)=0$ which implies that $x^k$ is a critical point of $\varphi$.  
 \end{proof}\\

The following result is proved in \cite[Lemma 4.3]{chouzenoux2014} and will be used in our convergence analysis. 
\begin{lemma}[Chouzenoux et al. (2014)] \label{l:cho}
Let $(u^k)_{k \in \mathbb N}	$, $(g^k)_{k \in \mathbb N}	$, $(\hat g^k)_{k \in \mathbb N}	$ and $(\hat\Delta^k)_{k \in \mathbb N}	$ be sequences of nonnegative reals and let $\theta \in ]0,1[$. Assume that 
\begin{enumerate}[(i)]
	\item for every $k \in \mathbb N$, $(u^k)^2 \leq (g^k)^\theta \hat\Delta^k$,
	\item $(\hat\Delta^k)_{k \in \mathbb N}	$ is summable,
	\item for every  $k \in \mathbb N$, $g^{k+1} \leq (1-\alpha) g^k + \hat g^k$, where $\alpha \in ]0,1]$, and
	\item for every $k \geq k_0$, $(\hat g^k)^\theta \leq \rho u^k$, where $\rho>0$ and $k_0 \in \mathbb N$.
\end{enumerate}
Then  $(u^k)_{k \in \mathbb N}	$ is a summable sequence.
\end{lemma}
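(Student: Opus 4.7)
The plan is to reduce everything to showing that $\sum_{k}(g^k)^\theta<\infty$, after which condition (i) combined with Cauchy--Schwarz and the summability of $(\hat\Delta^k)$ will immediately yield the summability of $(u^k)$.

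First I would exploit the concavity of $t\mapsto t^\theta$ on $[0,\infty)$ (since $\theta\in(0,1)$), which gives the subadditivity inequality $(a+b)^\theta\leq a^\theta+b^\theta$. Applied to the recursion (iii), this produces
\begin{equation*}
(g^{k+1})^\theta \leq (1-\alpha)^\theta (g^k)^\theta + (\hat g^k)^\theta.
\end{equation*}
Setting $\tilde\alpha:=1-(1-\alpha)^\theta\in(0,1]$ (which is strictly positive because $\alpha>0$), one obtains a recurrence of the same shape as (iii) but for the sequence $(g^k)^\theta$, with perturbation $(\hat g^k)^\theta$. Telescoping this recurrence from $k_0$ to $N$ and using (iv) to replace $(\hat g^k)^\theta$ by $\rho u^k$, I would derive
\begin{equation*}
\tilde\alpha\sum_{k=k_0}^{N}(g^k)^\theta \;\leq\; (g^{k_0})^\theta + \rho\sum_{k=k_0}^{N}u^k.
\end{equation*}

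Next, I would use assumption (i) in the form $u^k\leq (g^k)^{\theta/2}(\hat\Delta^k)^{1/2}$ and apply the Cauchy--Schwarz inequality to bound
\begin{equation*}
\sum_{k=k_0}^{N} u^k \;\leq\; \Bigl(\sum_{k=k_0}^{N}(g^k)^\theta\Bigr)^{1/2}\Bigl(\sum_{k=k_0}^{N}\hat\Delta^k\Bigr)^{1/2}\;\leq\; \sqrt{T\,S_N},
\end{equation*}
where $S_N:=\sum_{k=k_0}^{N}(g^k)^\theta$ and $T:=\sum_{k\geq k_0}\hat\Delta^k<\infty$ by (ii). Combining the last two displays yields the quadratic-in-$\sqrt{S_N}$ inequality $\tilde\alpha\,S_N - \rho\sqrt{T}\,\sqrt{S_N} - (g^{k_0})^\theta \leq 0$, from which one reads off a bound $S_N\leq C$ independent of $N$. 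Since $S_N$ is nondecreasing, $\sum_{k\geq k_0}(g^k)^\theta$ converges, and then a second application of Cauchy--Schwarz gives $\sum_k u^k<\infty$.

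The main technical obstacle is the step from the linear perturbed recurrence (iii) to the recurrence for $(g^k)^\theta$: it hinges crucially on the subadditivity of $t\mapsto t^\theta$ for $\theta\in(0,1)$, which is precisely why that range of $\theta$ is imposed, and on $\alpha>0$ so that the contraction factor $(1-\alpha)^\theta$ remains strictly less than $1$. Once this recurrence is in place, the rest is a bootstrap: assumption (iv) feeds $u^k$ into a bound on $\sum(g^k)^\theta$, while assumption (i) together with (ii) feeds $\sum(g^k)^\theta$ back into a bound on $\sum u^k$, and the two bounds close up via the quadratic inequality.
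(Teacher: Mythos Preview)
Your argument is correct. The paper does not actually prove this lemma; it merely quotes it from \cite{chouzenoux2014} (their Lemma~4.3) and uses it as a black box in the proof of the subsequent theorem. So there is no ``paper's own proof'' to compare against here.

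For what it is worth, your route---pass from the recursion (iii) to one for $(g^k)^\theta$ via the subadditivity $(a+b)^\theta\le a^\theta+b^\theta$, telescope to get $\tilde\alpha\sum(g^k)^\theta$ controlled by $(g^{k_0})^\theta+\rho\sum u^k$, then close the loop with Cauchy--Schwarz and (i),(ii) to obtain a quadratic inequality in $\sqrt{S_N}$---is essentially the argument given in the original reference. The only cosmetic point is that your telescoping step silently drops the nonnegative term $(g^{N+1})^\theta$ on the left, which is of course harmless; otherwise everything is clean.
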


In the next result we will rely on the following form of the Kurdyka--\L ojasiewicz property (see \cite{chouzenoux2014}): A function $\phi$ satisfies the Kurdyka--\L ojasiewicz inequality if for every $\xi \in \reals$ and for every bounded subset $E \subset \reals^m$, there exist three constants $\kappa>0$, $\zeta >0$ and $\theta \in [0,1[$ such that for all $v \in \partial \phi (x) $ and every $x \in E$ such that $|\phi(x) - \xi| \leq \zeta$, it follows that 

\begin{equation}
	\kappa |\phi(x) - \xi|^\theta \leq \norm{v}_2,
\end{equation}
	with the convention $0^0 =0$.

\begin{assumption}\label{H:KL}
	The function $\varphi$ defined in \eqref{eq:P} satisfies the Kurdyka--\L ojasiewicz property, i.e., there exist positive constants $\kappa$, $\zeta$ and $\theta \in [0,1[$ such that for all $z \in \partial(\beta \norm{\cdot}_1)(x)$ and every $x \in E$ (with $E\subset \reals^m$ a bounded set), it holds that 
	\begin{equation}\label{eq:KL}
		\kappa |\varphi(x) - \xi|^\theta \leq \norm{\nabla f(x) + z}_2,
	\end{equation}
for $\xi \in \reals$ such that $|\varphi(x) - \xi| \leq \zeta$.
\end{assumption}

The next result is based on the ideas developed in the convergence analysis presented in \cite[Theorem 4.1]{chouzenoux2014}. There, the Kurdyka--\L ojasiewicz property is applied in order to show that the sequence  $(\norm{x^{k+1} - x^{k}})_{k \in \mathbb N}$ fulfills a finite length property, which allows to conclude the convergence of $(x^k)_{k\in \mathbb N}$. In our case, the convergence analysis is analogous to the theory developed in \cite{chouzenoux2014} but there are some key differences. For convenience of the reader, we include a detailed proof which mimics some of the steps of \cite[Theorem 4.1]{chouzenoux2014}. 	

\begin{theorem} Let us suppose that Assumptions 1-4 are satisfied. Then $(x^k)_{k\in \mathbb N}$ generated by Algorithm 1 converges to point $\bar x$ such that $0 \in \nabla f(\bar x)+ \partial (\beta \|\bar x\|_1)$.
\end{theorem}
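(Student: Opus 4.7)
The plan is to follow the three-step framework of \cite[Theorem 4.1]{chouzenoux2014}, adapted to our projected orthant-wise update: (a) establish monotone decrease and square-summability of the step sizes; (b) show via the Kurdyka--\L ojasiewicz inequality that the sequence has finite length, hence converges; (c) identify the limit as a stationary point by passing to the limit in the pseudo-gradient.

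First, I would combine the projected line-search rule \eqref{eq:line search} with the descent estimate \eqref{eq:grad_phi.2} of Proposition \ref{l:l1}, using $\tilde d^k = x^{k+1}-x^k$, to deduce
\[
\varphi(x^{k+1}) - \varphi(x^k) \leq \widetilde{\nabla}\varphi(x^k)^\top \tilde d^k \leq -\mu\bigl(\|d^k\|_2^2 + \|\tilde d^k\|_2^2\bigr).
\]
Assumption \ref{h:f} makes $\varphi$ coercive and bounded below, so $\{x^k\}$ lies in a bounded sublevel set and $\varphi(x^k) \searrow \varphi^* > -\infty$. Telescoping yields $\sum_k \|d^k\|_2^2 < \infty$ and $\sum_k \|\tilde d^k\|_2^2 < \infty$; in particular $d^k \to 0$. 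From \eqref{eq:direction}, Assumption \ref{H:BFGS} and the bound $\|\Gamma^k\| \le \gamma$, one gets $\|\widetilde{\nabla}\varphi(x^k)\|_2 \leq (\hat C + \beta\gamma)\|d^k\|_2 \to 0$. Since $\widetilde{\nabla}\varphi(x^k) \in \nabla f(x^k) + \partial(\beta\|x^k\|_1)$ and the latter multifunction is outer semicontinuous with closed values, every accumulation point of $\{x^k\}$ already satisfies the first-order optimality condition.

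To upgrade the existence of accumulation points to full convergence of the sequence, I would invoke Assumption \ref{H:KL} and Lemma \ref{l:cho}. Set $u^k := \|\tilde d^k\|_2 = \|x^{k+1}-x^k\|_2$ and $g^k := \varphi(x^k) - \varphi^*$. Applying the \textup{K\L} inequality with $\xi = \varphi^*$ and the element $\widetilde{\nabla}\varphi(x^k) - \nabla f(x^k) \in \partial(\beta\|x^k\|_1)$ gives
\[
\kappa (g^k)^\theta \leq \|\widetilde{\nabla}\varphi(x^k)\|_2 \leq C\|d^k\|_2.
\]
The sufficient-decrease inequality provides $\hat\Delta^k := g^k - g^{k+1} \geq \mu(u^k)^2$, which is summable, yielding hypothesis (ii) and the building block for (i). The recursion (iii) is obtained by absorbing the gap between $u^k$ and $\|d^k\|_2$ (and the $o(s_k)$ remainder appearing in the proof of Theorem \ref{T:descent}) into an auxiliary sequence $\hat g^k$; the \textup{K\L} bound above then delivers (iv) in the form $(\hat g^k)^\theta \leq \rho u^k$. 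The conclusion $\sum_k u^k < \infty$ forces $\{x^k\}$ to be Cauchy in $\reals^m$, hence convergent to a unique limit $\bar x$, which by the previous step satisfies $0 \in \nabla f(\bar x) + \partial(\beta\|\bar x\|_1)$.

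The hardest part will be the precise identification of the four sequences in Lemma \ref{l:cho}, especially hypothesis (i). The difficulty is the mismatch between $\|d^k\|_2$, which controls the \textup{K\L} side through the pseudo-gradient, and $\|\tilde d^k\|_2$, which is the actual step taken after projection. On the coordinates $i \not\in \mathcal{H}^k$ the update is clipped from $s_k d^k_i$ to $-x^k_i$; however, \eqref{eq:dem pro descent dire sign cond} gives $|x^k_i| \leq s_k |d^k_i|$ on this set, so together with Assumption \ref{H:s_k} one has $\hat s\|\tilde d^k\|_2 \le s_k \|d^k\|_2 \le \|d^k\|_2$, i.e.\ the two quantities are equivalent up to constants independent of $k$. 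With this equivalence established, the chain of estimates used to verify (i)--(iv) mimics \cite[Theorem 4.1]{chouzenoux2014}, the only novelty being that the subgradient selection is the orthant pseudo-gradient $\widetilde{\nabla}\varphi(x^k)$ rather than a proximal residual.
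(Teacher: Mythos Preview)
Your outline is essentially the same three–step programme the paper follows, and your opening move—combining the line-search rule \eqref{eq:line search} directly with Proposition~\ref{l:l1} to obtain the clean sufficient-decrease inequality $\varphi(x^{k+1})-\varphi(x^k)\le -\mu(\|d^k\|_2^2+\|\tilde d^k\|_2^2)$—is in fact slightly tidier than the paper's route via \eqref{eq:desc_8}. The difficulty lies in how you set up Lemma~\ref{l:cho}, where there are two concrete gaps.

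\emph{First gap: the claimed equivalence $\|d^k\|_2\sim\|\tilde d^k\|_2$.} You only prove the one-sided bound $\|\tilde d^k\|_2\le s_k\|d^k\|_2\le\|d^k\|_2$; the sentence ``i.e.\ the two quantities are equivalent up to constants'' does not follow. On coordinates $i\in\mathcal S_k$ (strongly active, $z_i^k=0$), one has $\tilde d_i^k=0$ while $d_i^k$ need not vanish (the system \eqref{eq:direction} is coupled), so a reverse inequality $\|d^k\|_2\le C\|\tilde d^k\|_2$ is not available from your argument. This matters because condition (iv) of Lemma~\ref{l:cho} with your choice $u^k=\|\tilde d^k\|_2$ would require $(\hat g^k)^\theta\le\rho\|\tilde d^k\|_2$, whereas the K\L\ inequality only yields $(g^k)^\theta\le\tfrac{C}{\kappa}\|d^k\|_2$. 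The paper circumvents this by taking $u^k=\|d^k\|_2$ instead: then (iv) is immediate, and summability of $\|\tilde d^k\|_2$ is deduced \emph{a posteriori} from the one-sided bound $\|\tilde d^k\|_2\le\|d^k\|_2$.

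\emph{Second gap: hypothesis (i).} With $\hat\Delta^k:=g^k-g^{k+1}$, the sufficient-decrease inequality gives $(u^k)^2\le\tfrac{1}{\mu}\hat\Delta^k$, not $(u^k)^2\le (g^k)^\theta\hat\Delta^k$; the required factor $(g^k)^\theta$ goes to zero, so this is not a ``building block'' for (i) without further work. The missing ingredient is the elementary convexity estimate $t-s\le\tfrac{1}{1-\theta}\,t^{\theta}\bigl(t^{1-\theta}-s^{1-\theta}\bigr)$ for $0\le s\le t$, applied with $t=g^k$ and $s=g^{k+1}$. This is exactly what the paper does: it sets $\hat\Delta^k=\tfrac{1}{\hat\mu(1-\theta)}\bigl((g^k)^{1-\theta}-(g^{k+1})^{1-\theta}\bigr)$, which is telescoping (hence summable) and makes (i) hold with $u^k=\|d^k\|_2$. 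Once you insert this step and switch to $u^k=\|d^k\|_2$, conditions (i)–(iv) verify exactly as in the paper with $\hat g^k=g^{k+1}$ and $\alpha=1$, and the rest of your argument goes through.
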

\begin{proof}
Since the restriction of $\varphi$ to its domain is continuous, and by Assumption \ref{h:f}, $\varphi$ is coercive, we have that the level set $\{ x: \varphi(x) \leq \varphi(x^0) \}$ is a compact set. Moreover, by Theorem \ref{T:descent} we have the monotonicity property: $\varphi(x^{k+1}) < \varphi(x^{k})$, for all $k \in \mathbb{N}$. In addition, since $\varphi$ is bounded from below, we have that the sequence $(\varphi(x^k))_{k\in \mathbb N}$ converges to some limit $\xi$ as $k \arrow \infty$. 
    
On the other hand, let $s_k$ be a step--size satisfying \eqref{eq:desc_8}, Theorem \ref{T:descent}, and Assumptions \ref{H:s_k} and \ref{H:BFGS}. We have that, for sufficiently large $\gamma$, there is a constant $\hat \mu $ such that
    \begin{align}
   \hat \mu \|d^k\|_2^2 	\leq \varphi(x^k) -\varphi(x^{k+1}),     
   \end{align}
and, therefore,
    \begin{align}\label{eq:conv_1}
     \|d^k\|_2^2 	\leq  \frac{1}{\hat\mu} \l(  (\varphi(x^k) - \xi ) - (\varphi(x^{k+1}) -\xi) \r).     
   \end{align}
Note that \eqref{eq:conv_1} implies that $\norm{d^k}_2 \arrow 0$ as $k\arrow \infty$. In order to estimate the right--hand side of \eqref{eq:conv_1}, we will use the following inequality for convex differentiable functions $\psi: [0, +\infty[ \to [0, +\infty[$:
\begin{equation} \label{eq:psi}
\psi(u) - \psi (v) \leq \psi'(u)(u-v),  
\end{equation}
for all $u$ and $v$ in the interval $[0, +\infty[$. By taking $\psi(u) = u^{\frac{1}{1-\theta}}$, with $\theta \in [0,1[$, $u =  (\varphi(x^k) - \xi )^{1-\theta} $
 and $v = (\varphi(x^{k+1}) - \xi )^{1-\theta} $, \eqref{eq:psi} implies that
 
\begin{align}\label{eq:conv_2}
(\varphi(x^k) - \xi ) - (\varphi(x^{k+1}) -\xi) \leq \frac{1}{1-\theta} (\varphi(x^k) - \xi )^\theta \l[ (\varphi(x^k) - \xi )^{1-\theta} - (\varphi(x^{k+1}) - \xi )^{1-\theta} \r].
\end{align} 
Let us denote $\Delta^k := (\varphi(x^k) - \xi )^{1-\theta} - (\varphi(x^{k+1}) - \xi )^{1-\theta}$. From \eqref{eq:conv_1} and  \eqref{eq:conv_2} we get
 \begin{align}\label{eq:conv_3}
     \|d^k\|_2^2 	\leq  \frac{1}{\hat\mu (1- \theta)} (\varphi(x^k) - \xi )^\theta \Delta^k.     
   \end{align}
By Assumption \ref{H:KL}, $\varphi$ satisfies the Kurdyka--\L ojasiewicz property with constants $\kappa$, $\zeta$ and $\theta$. Moreover, since $\varphi(x^k) \arrow \xi$, there exists $k_0$ such that for $k\geq k_0$ we have that $|\varphi(x^k) -\xi| \leq \zeta $ and therefore, by \eqref{eq:KL} we obtain
\begin{equation}\label{eq:conv_4}
 \kappa |\varphi(x^k) -\xi|^\theta 	\leq \norm{ \nabla f(x^k) + z}_2, \quad \forall \,z \in \partial \l(\beta \|\cdot\|_1 \r)(x^k).
\end{equation}
	%for all $z \in \partial \l(\|\cdot\|_1 \r)(x^k)$. 
In particular, if we choose $\tilde z \in  \partial \l( \beta \|\cdot\|_1 \r)(x^k) $ as
	\begin{equation}
	\tilde z_i=	\l\{ 
	\begin{array}{cl}
		\beta \text{ sign}(x_i) & \text{if } x_i \not =0,\\
		\beta & \text{if } x_i =0 \text{ and } \nabla_if(x^k)<-\beta,\\
		-\beta & \text{if } x_i =0 \text{ and } \nabla_if(x^k)>\beta,\\
		-\nabla_if(x^k) & \text{otherwise},
		\end{array}
		\r.
	\end{equation}
from \eqref{eq:conv_4} we can estimate
\begin{align}\label{eq:conv_5}
 \kappa |\varphi(x^k) -\xi|^\theta 	&\leq \norm{ \nabla f(x^k) + \tilde z}_2
 									= \norm{ \widetilde \nabla \varphi (x^k)}_2, \nonumber\\	
 									&= \norm{(B^k + \beta\Gamma^k) d^k}_2
 									\leq \norm{B^k + \beta\Gamma^k} \norm{d^k}_2,
\end{align}
and by our Assumption \ref{H:BFGS}, it follows that 
\begin{align}\label{eq:conv_6}
 |\varphi(x^k) -\xi|^\theta \leq \frac{C}{\kappa} \norm{d^k}_2,
\end{align}
for some positive constant $C$. 

Now we use Lemma \ref{l:cho} to infer the summability of the sequence $(u^k)_{k \in \mathbb N}$  defined by $u^k = \norm{d^k}$.  For this purpose we define the nonnegative sequences $(g^k)_{k \in \mathbb N}	$, $(\hat g^k)_{k \in \mathbb N}	$ and $(\hat\Delta^k)_{k \in \mathbb N}	$ by $g^k = \varphi(x^k) - \xi$, $\hat g^k = \varphi(x^{k+1}) - \xi$ and $\hat\Delta^k = \ds\frac{1}{\hat\mu (1- \theta)} \Delta^k$, respectively. Let us verify conditions (i)--(iv) in Lemma \ref{l:cho}: 
\begin{itemize}
\item[(i)] is satisfied considering \eqref{eq:conv_3}: $ \norm{d^k}^2_2  \leq (g^k)^\theta \hat \Delta^k.	\nonumber $

\item[(ii)] is obtained following \cite[Theorem 4.1]{chouzenoux2014}:
\begin{align}
	\sum_{k\geq k_0}^\infty \Delta^k &= \sum_{k\geq k_0}^\infty \l[ (\varphi(x^k) - \xi )^{1-\theta} - (\varphi(x^{k+1}) - \xi )^{1-\theta}\r] \nonumber \\
									& = 	(\varphi(x^{k_0}) - \xi )^{1-\theta}, \nonumber
 \end{align}
which implies that $(\hat \Delta^k)_{k \in \mathbb N}$ is summable. 

\item[(iii)] follows from the choice $g^{k+1} = \varphi(x^{k+1}) -\xi = \hat g^k = (1-\alpha) g^k + \hat g^k$, with $\alpha=1$.

\item[(iv)] From Theorem \ref{T:descent} we have that  $\varphi(x^{k+1}) -\varphi(x^k)<0$ and
\begin{align*}
	 (\hat g^k) &= (\varphi(x^{k+1}) -\xi) \\
			 &= \varphi(x^{k+1}) -\varphi(x^{k}) + \varphi(x^{k}) -\xi\\
			 &< \varphi(x^{k}) -\xi.
\end{align*}
Therefore, by using \eqref{eq:conv_6}, for $k\geq k_0$, we have
\begin{equation*}
	(\hat g^k)^\theta < (\varphi(x^{k}) -\xi)^\theta \leq \frac{C}{\kappa} \norm{d^k}_2.
\end{equation*}
\end{itemize}
Consequently, conditions (i)--(iv) of Lemma \ref{l:cho} are satisfied. This allows us to conclude that $\norm{d^k}$ is summable when $\theta>0$. If $\theta =0 $, from the convention $0^0=0$ and since, for sufficiently large $k$, $\frac{C}{\kappa}\norm{d^k}<1$, \eqref{eq:conv_6} implies that $\varphi(x^k) = \xi$ (otherwise a contradiction is obtained). Replacing this last identity in \eqref{eq:conv_3}, it follows that $\norm{d^k}=0$ and $\norm{d^k}$ is summable.
 
From the summability of $\norm{d^k}$ we conclude the summability of $\norm{\tilde d^k}$ since $\norm{\tilde d^k} \leq \norm{d^k}$, for all $k \in \mathbb N$.

Our final step consists in verifying the convergence of $(x^k)_{k \in \mathbb N}$ to a critical point of $\varphi$.  Since $\norm{x^{k+1} -x^k}_2 = \norm{\tilde d^k}_2 \to 0$ as $k \to \infty$, and $\|\tilde d^k\|$ is summable, then $(x^k)_{k \in \mathbb N}$ is a Cauchy sequence (then convergent). Let us denote its limit by $\bar x$.

Since $\widetilde \nabla \varphi (x^k) \in \nabla f(x^k) + \partial (\beta \norm{\cdot}_1) (x^k)$ we have that the pair $(x^k,\widetilde \nabla \varphi (x^k) ) \in \text{Graph} ( \nabla f + \partial (\beta \norm{\cdot}_1)  )$. Proceeding as in \eqref{eq:conv_5}, we deduce $\norm{\widetilde \nabla \varphi (x^k) }_2 \leq \frac{C}{\kappa} \norm{d^k}_2$, which by \eqref{eq:conv_1} yields
\begin{equation*}
(x^k, \widetilde \nabla \varphi (x^k) ) 	 \arrow (\bar x, 0) \quad\text{as} \quad k \arrow +\infty,
\end{equation*} 
which belongs to  $\text{Graph} ( \nabla f + \partial (\beta \norm{\cdot}_1)  )$ due to its closedness. Therefore we obtain that $0 \in \nabla f(\bar x)+ \partial (\beta \norm{\cdot}_1) (\bar x)$, from which we finally get the result.
\end{proof}

\section{Active set properties}\label{s:activesets}

In the following theorem, we show that the identification of the strongly active sets performed by \row is monotone in a neighbourhood of the solution $\bar x$. The size of the neighbourhood turns out to be strongly dependent on the regularization parameter $\gamma$. The monotonicity of the active sets is proved under a strict complementarity assumption. 

\begin{theorem}[Monotonicity of the strongly active set]\label{th:activesets}
Let us assume that the sequence $\{ x^k\}_{k\in \mathbb N}$ generated by algorithm \row converges to the solution $\bar x$. If strict complementarity holds at $x^k$, i.e. $\{i: \nabla_{i} f(x^k) =\beta, x^k_i=0 \} = \emptyset$, then, for $k$ sufficiently large,
\begin{equation} \label{eq:mono}
\mathcal{S}_{k} \subseteq \mathcal{S}_{k+1}. 
\end{equation}
\end{theorem}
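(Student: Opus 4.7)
The plan is to unpack the two conditions that characterize membership in $\mathcal{S}_{k+1}$ for an index $i \in \mathcal{S}_k$ and verify each of them separately. Since the definition of $z_i$ in \eqref{eq:orthant_dir} gives $z_i(x)=0$ if and only if $x_i=0$ and $|\nabla_i f(x)|\leq \beta$, I must prove that, for $k$ sufficiently large, every $i \in \mathcal{S}_k$ satisfies (a) $x_i^{k+1}=0$ and (b) $|\nabla_i f(x^{k+1})| \leq \beta$.

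Step (a) will be immediate from the projection. Fix $i \in \mathcal{S}_k$, so that $z_i^k = 0$. Inspecting \eqref{eq:projection}, $\mathcal{P}(y)_i = y_i$ holds only when $\text{sign}(y_i) = \text{sign}(z_i^k) = 0$, which forces $y_i = 0$; otherwise $\mathcal{P}(y)_i = 0$. Applying this with $y = x^k + s_k d^k$ yields $x_i^{k+1} = 0$ irrespective of the magnitude of $d_i^k$; this step uses only the definition of the projection together with $z_i^k = 0$.

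For step (b), strict complementarity at $x^k$ provides the strict inequality $|\nabla_i f(x^k)| < \beta$ for each $i \in \mathcal{S}_k$. Because $\mathcal{S}_k$ is a subset of the finite index set $\{1, \ldots, m\}$, the family $(\mathcal{S}_k)_{k \in \mathbb{N}}$ takes only finitely many distinct values, and combined with the convergence $x^k \to \bar x$ (from the previous theorem) and the continuity of $\nabla f$ this yields a uniform positive gap $\eta > 0$ with
$$|\nabla_i f(x^k)| \leq \beta - \eta \quad \text{for all } i \in \mathcal{S}_k \text{ and } k \text{ large}.$$
By the Lipschitz property in Assumption \ref{h:f}(i),
$$|\nabla_i f(x^{k+1})| \leq |\nabla_i f(x^k)| + L \, \|x^{k+1} - x^k\|_2 = |\nabla_i f(x^k)| + L \, \|\tilde d^k\|_2 \leq \beta - \eta + L \, \|\tilde d^k\|_2.$$
The convergence theorem above ensures $\|\tilde d^k\|_2 \to 0$ (indeed it is summable), so for $k$ sufficiently large $L \|\tilde d^k\|_2 < \eta$ and hence $|\nabla_i f(x^{k+1})| < \beta$. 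Combined with step (a), this yields $z_i^{k+1} = 0$, i.e.\ $i \in \mathcal{S}_{k+1}$, completing the inclusion.

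The main obstacle I foresee lies in justifying the uniform gap $\eta$ in step (b). The strict complementarity hypothesis is phrased at the iterates $x^k$, so in principle the index-wise gap $\beta - |\nabla_i f(x^k)|$ could conceivably shrink to zero along the sequence. The crucial observation closing this loophole is combinatorial: $(\mathcal{S}_k)$ admits only finitely many distinct values in the power set of $\{1,\ldots,m\}$, so one can first pass to a tail on which $\mathcal{S}_k$ has stabilized (or at least lies inside the eventual strongly active set at $\bar x$) and then invoke continuity of $\nabla f$ along the convergent sequence to extract the uniform bound. Once this is in place, the Lipschitz estimate and the summability of $\|\tilde d^k\|_2$ close the argument mechanically.
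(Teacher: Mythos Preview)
Your overall strategy matches the paper's: verify (a) $x_i^{k+1}=0$ via the projection and (b) $|\nabla_i f(x^{k+1})|\leq \beta$ via a perturbation argument starting from the strict inequality $|\nabla_i f(x^k)|<\beta$. Step (a) is identical. For step (b), however, the routes diverge in a way that matters for the paper's narrative.

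The paper does not invoke the Lipschitz constant $L$ of $\nabla f$. Instead it Taylor-expands $\nabla_i f(x^{k+1})$ around $x^k$ to bring in the Hessian $H^k=\nabla^2 f(x^k+(1-\theta)\tilde d^k)$, uses $\tilde d_i^k=0$, and then splits the sum $\sum_{j\neq i} H_{ij}^k \tilde d_j^k$ according to whether $|x_j^k|\leq 1/\gamma$ or $|x_j^k|>1/\gamma$. Components in the first group are controlled by the bound \eqref{eq:mono_1}, producing a term of order $C/\gamma$; components in the second group are handled by $\tilde d^k\to 0$. The resulting estimate \eqref{eq:active set bound} reads $|\nabla_i f(x^{k+1})| < \beta + C/\gamma + \varepsilon\|H^k\|_2$, so the regularization parameter $\gamma$ appears explicitly. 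This is precisely the point of the section: the Remark immediately following the theorem stresses that the enriched matrix $\Gamma^k$ enlarges the neighbourhood on which active-set monotonicity kicks in. Your Lipschitz bound $L\|\tilde d^k\|_2$ is cleaner and needs only Assumption~\ref{h:f}(i) rather than $f\in C^2$, but it erases $\gamma$ from the estimate entirely, so it cannot support that remark or the subsequent comparison with $\gamma$-free methods.

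On the uniform gap $\eta$: your concern is legitimate, but the combinatorial fix you propose does not close it. That $(\mathcal S_k)$ takes only finitely many values does not force stabilization, and passing to a subsequence with $i\in\mathcal S_{k_j}$ yields $\bar x_i=0$ and, by continuity, only $|\nabla_i f(\bar x)|\leq\beta$, not strict inequality. Without strict complementarity \emph{at $\bar x$} you cannot extract a uniform $\eta>0$ from the hypothesis as written. The paper's proof is equally informal at this very point---it simply carries the strict inequality and lets $\varepsilon$ and $1/\gamma$ be small---so you are not missing an idea that the paper supplies; but be aware that your attempted repair does not fully work either.
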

\begin{proof}
Let us suppose that $z^{k}_{i}=0$ for some index $i$. In order to prove \eqref{eq:mono} we must verify two properties: $i) \,x_{i}^{k+1} = 0$ and $ii) \,|\nabla_{i} f(x^{k+1})| \leq \beta$. Property $i)$ follows directly by taking into account the projection \eqref{eq:projection}.  

Let us check property $ii)$. By using a Taylor expansion of $\nabla f$ at $x^{k+1}=x^{k}+\tilde{d}^{k}$, for some $\theta \in (0,1)$, we get
\begin{align}\label{eq:mono_00}
\nabla f(x^{k+1}) &= \nabla f(x^{k})+\nabla^{2} f(x^{k}+(1-\theta)\tilde{d}^{k}) ^{\top} \tilde{d}^{k}.
%
%&=\nabla f(\bar x)+ \nabla f(x^{k})-\nabla f(\bar x)+\nabla^{2} f(x^{k}+(1-\theta)\tilde{d}^{k}  ) ^{\top} \tilde{d}^{k}, 
\end{align}
By applying our strict complementarity hypothesis in \eqref{eq:mono_00} we obtain
\begin{align}\label{eq:mono_0}
|\nabla_{i} f(x^{k+1})| <\beta +| \sum_{j} \l[\nabla^{2} f(x^{k}+(1-\theta)\tilde{d}^{k}  )\r]_{ij} \tilde{d}_{j}^{k}|. 
\end{align}
First, we analyze the last sum in \eqref{eq:mono_0}. In order to simplify the notation, we define the matrix $H^k$ with entries $H^k_{ij}=\l[\nabla^{2} f(x^{k}+(1-\theta)\tilde{d}^{k}  )\r]_{ij} $. 
 By construction $\tilde d_{i}^{k}=0$, and we get the estimate
\begin{align} |\sum_{j \not =i } H^k_{ij} \tilde d_{j}^{k} | \leq \norm{H^k}_{2} \sum_{j \not= i}| \tilde d_{j}^{k} |\leq \norm{H^k}_{2} \l( \sum_{\substack{j \not= i \\ j:|x^{k}_{j}| \leq 1/\gamma}}| \tilde d_{j}^{k} | + \sum_{\substack{j \not= i \\ j:|x^{k}_{j}| > 1/\gamma}}| \tilde d_{j}^{k} | \r).
\end{align}
For the first sum in the last inequality, each $\tilde d_{j}^{k}$, $j \in \mathcal{H}^{k}$ can be bounded using \eqref{eq:mono_1}. For indexes $j \not \in \mathcal{H}^{k}$ it holds that $|\tilde d^{k}_{j}|=|x^k_{j}|\leq 1/\gamma$. 
Therefore, there exists a constant $C>0$ such that 
\begin{align} \label{eq:mono_2}
|\sum_{j \not =i } H^k_{ij} \tilde d_{j}^{k} | \leq  \norm{H^k}_{2} \l( \frac{C}{\gamma} + \sum_{\substack{j \not= i \\ j:|x^{k}_{j}| > 1/\gamma}}| \tilde d_{j}^{k} | \r).
\end{align}
On the other hand,  since $x^{k} \arrow \bar x$ as $k \arrow \infty$  we have that $ \tilde{d}^{k}=x^{k+1}-x^{k} \arrow 0$ as $k \arrow \infty$. % $\|\nabla^{2} f(x^{k}+(1-\theta)\tilde{d}^{k}  ) \| \leq C$, for some constant $C$ independent of $k$. 
Consequently, for any $\varepsilon>0$ there exists $k_{0}$ such that for any $k \geq k_{0}$ the relations ~\eqref{eq:mono_1} ,\eqref{eq:mono_0} and ~\eqref{eq:mono_2}  leads to the following bound
\begin{align} \label{eq:active set bound}
|\nabla_{i} f(x^{k+1})| &< \beta  + \dfrac{C}{\gamma}  +\varepsilon \norm{H^k}_2. 
\end{align}
Since $\varepsilon>0$ is arbitrary and  taking $\gamma$ large enough, condition $ii)$ holds, which together with $i)$ implies \eqref{eq:mono}.
\end{proof}

\begin{remark}
From estimate \eqref{eq:active set bound} it becomes clear that $\gamma$ plays a crucial role in order to have a larger neighbourhood where the monotonicity of the active set occurs. If $\gamma$ would not be present, the index $k_0$, from which on the monotonicity of the strongly active sets is achieved, would be larger. This is also verified experimentally (see Section 5.2 below).
\end{remark}

\begin{remark}
If we assume that $f$ is separable, its Hessian is a diagonal matrix and 
 \[
 \ds\sum_{j\neq i} H^k_{ij}\widetilde{d^k_j}=0,
 \]
 which, together with the assumptions of Theorem \ref{th:activesets}, implies directly (without the strict complimentarity hypothesis) that $|\nabla_i f(x^{k+1})|=|\nabla_i f(x^k)|\leq\beta $ and $\mathcal{S}_{k} \subseteq \mathcal{S}_{k+1}.$
\end{remark}

%%%%%%%%%%%%%%%%%%

%From~\eqref{eq:est005} we can conclude
%\begin{align*}
%|\nabla_i f(x^{k+1})|&{} < \beta +2\parallel H\parallel_{\infty}\parallel d^k \parallel_{\infty}\\
%&{}\leq \beta + \dfrac{2}{\gamma} \parallel H\parallel_{\infty}\parallel \nabla\widetilde{\varphi}(x^k)\parallel_{\infty}
%\end{align*}
%For a sufficiently big $\gamma$ we have
%\[
%|\nabla_i f(x^{k+1})|< \beta +\dfrac{2}{\gamma}\parallel H \parallel_{\infty} \parallel \nabla\widetilde{\varphi}(x^k)\parallel_{\infty}
%\]
%this implies that
%\[
%|\nabla_i f(x^{k+1})| < \beta\qedhere
%\]
%It is easy to note that in the case of the NW--CG algorithm the region for what the monotonicity of the active sets holds, is smaller, because this algorithm has to guarantee that the direction is smaller for all the components, it means, it has to be closer to the solution than the OESOM algorithm. In the case of the OESOM algorithm, since the regularization guarantee that some of the componentes are smaller, and we have to guarantee the smaller values for the remaining components. 
%
 
\section{The reduced algorithm and its interpretation as semismooth Newton method}\label{s:roesom}
Based on the enriched second-order updates \eqref{eq:direction} and the subsequent projection step, the idea of combining both effects in a single update arises. A natural alternative for this consists in incorporating the projection in the building of the second order matrix. Specifically, by reordering the iterates in such a way that the components with the indexes that belong to the strongly active set $\mathcal S_k$ appear first, i.e., $d^k=(d^k_{\mathcal S_k}, d^k_{I \backslash \mathcal S_k})^T$
and considering the reduced second order matrix
$$(B_R^k)_{i j}:=\begin{cases}
\delta_{i j} &\text{if }i \in \mathcal{S}_k, \text{ for all }j,\\
(B^k+\beta \Gamma^k)_{i j} &\text{if not,}
\end{cases}
$$
the following system may be solved:
\begin{equation} \label{eq:system for reduced oesom}
B_R^k \begin{pmatrix}
d^k_{\mathcal S_k}\\ d^k_{I \backslash\mathcal S_k}
\end{pmatrix} = \begin{pmatrix}
-x^k_{\mathcal S_k}\\ - \tilde \nabla \varphi (x^k)_{I \backslash\mathcal S_k}
\end{pmatrix}.
\end{equation}
In this manner, the second order information is only used for the update of $x^k_i$, $i \in {I \backslash\mathcal S_k}$. This change makes the projection step superfluous and preserves the enriched curvature information as much as possible. The complete reduced \row algorithm is given through the following steps:

\begin{algorithm}[H]
\caption{Reduced Orthantwise Enriched Second Order Method}\label{rOESOM}
\begin{algorithmic}[1]
\State Initialize  $x^0$ and $B^0$. %$B=\frac{1}{2} \mathbb{I}_{n\times n}$.
\Repeat
\State Choose the regularization parameter $\gamma$.
\State Compute the matrix $\Gamma^k$ using~\eqref{eq:gammak} .%$\partial^2 H_\gamma(x)=\dfrac{\gamma}{\max{\{1,\gamma|x|\}}}-\chi\cdot\dfrac{\gamma x q}{\left(\max\{1,\gamma|x|\}\right)^2 \max\{1,|q|\}}$
\State Compute the descent direction $\widetilde{\nabla}\varphi(x^k)$ using~\eqref{eq:stp_direc}.
\State Compute $d^k$ by solving the linear system~\eqref{eq:system for reduced oesom}.%$\left(B^k + \beta \partial^2 H_\gamma(u)\right) p^k = - \nabla f(x)=-\left( \nabla l (x) + \beta z^k\right)$
\State Compute $$x^{k+1}=x^k + d^k,$$ 
%\indent with the line--search step $s_k$ computed by~\eqref{eq:line search}.
%\State Project $x^{k+1}$ onto the active orthant using \eqref{eq:projection}.
%\State $s_k=x^{k+1}- x^k$.
%\State $y_k=\nabla f\left(x^{k+1}\right)- \nabla f\left(x^{k}\right)$.
\State Update the matrix $B^k$.%$B^{k+1}=B^k + \dfrac{B^k s_k s_k^T B^k}{s_k^T B^k s_k}+  \dfrac{y_k y_k^T}{y_k^T s_k}$
\State $k \gets k+1$.
\Until{\hbox{stopping criteria is satisfied}}
\end{algorithmic}
\label{alg:roesom}
\end{algorithm}

Although the theory developed for \row cannot be directly applied to its reduced version, we are going to show next that the reduced method can also be casted as a semismooth Newton algorithm under the choice of appropriate parameter values. Moreover, motivated by this interpretation, an adaptive regularization parameter choice strategy is built upon the reduced \row.

Let us start by recalling some basic notions of semismooth Newton methods and the general framework proposed in \cite{byrd2012family}. 

Let $D \subset \mathbb R^n$ be an open set. The mapping $F: D\subset \mathbb R^n \to \mathbb R^m$ is called slant (or Newton) differentiable on the open subset $V \subset D$, if there exists a generalized derivative\index{derivative!Newton derivative} $G:V \to \mathbb R^{n \times m}$ such that
\begin{equation}\label{eq:CI023}
\lim_{h\to 0} \frac{1}{\|h\|_{\mathbb R^n}} \|F(x+h)-F(x)-G(x+h)h\|_{\mathbb R^m} =0,
\end{equation}
for every $x \in V.$ With help of this concept, a Newton iteration\index{semismooth Newton method} for finding a root $\bar x \in \mathbb R^n$ of $F(x)=0$ is given by:
\begin{equation}
G(x^k)d=-F(x^k), \qquad x^{k+1}=x^k+d.
\end{equation}
This iteration leads to a locally superlinear convergent method under suitable hypothesis on $F$. For more details we refer the reader to e.g. \cite{facchineipang}.

Let us now focus on our problem~\eqref{eq:fonc}. A simple reformulation leads to the equivalent operator equation $F(x)=0,$ where
\begin{equation}
F_i(x)= \max \left( \min \left( \tau (\nabla_i f(x)+\beta), x_i \right) , \tau (\nabla_i f(x)-\beta) \right), \, i=1, \dots, m.
\end{equation} 
Since the $max$ and $min$ functions are semismooth, a Newton type update can be obtained. By defining the following index sets:
\begin{align*}
\mathcal{N}^k&{}:=\left\{i \colon x_i^k \leq \tau\left(\nabla_i f(x^k)-\beta\right)\right\},\\
\mathcal{A}^k&{}:=\left\{i \colon \tau\left(\nabla_i f(x^k) - \beta \right)\leq x_i^k \leq \tau\left( \nabla_i f(x^k) + \beta\right) \right\},\\
\mathcal{P}^k&{}:= \left\{ i\colon  x_i^k \geq \tau \left(\nabla_i f(x^k) + \beta\right)\right\},
\end{align*} 
the semismooth Newton updates \ssn can be written in the following form:
\begin{subequations} \label{eq:ssn updates}
\begin{align}
%\begin{eqnarray}
e_i^Td&{}=-x_i^k,& i\in \mathcal{A}^k\setminus\left(\mathcal{N}^k \cup \mathcal{P}^k\right)  \label{eq:ssn update 1}\\ 
\nabla_{i:}^2 f(x^k)d&{}=-\left(\nabla_i f(x^k) + \beta \right),&{} i\in\mathcal{P}^k\setminus \mathcal{A}^k,\\ 
\nabla_{i:}^2 f(x^k)d&{}=-\left(\nabla_i f(x^k) - \beta \right),&{} i\in\mathcal{N}^k\setminus \mathcal{A}^k,\\
\left(\delta_i \nabla^2_{i:} f(x^k) + (1-\delta_i)e_i^T\right)d&{}=-\tau\left(\nabla_i f(x^k) - \beta\right),&{} i\in\mathcal{N}^k\cap \mathcal{A}^k, \label{eq:ssn update 4}\\ 
\left(\delta_i \nabla^2_{i:} f(x^k) + (1-\delta_i)e_i^T\right)d&{}=-\tau\left(\nabla_i f(x^k) + \beta\right),&{} i\in\mathcal{P}^k\cap \mathcal{A}^k,\\ 
x^{k+1}&{}=x^k+d,&{} 
%\end{eqnarray}
\end{align}
\end{subequations}
where $\nabla^2_{i:}f(x)$ stands for the $i$--th row of the Hessian and $e_i$ is the canonical vector of $\mathbb{R}^m$.

Byrd et al. \cite{byrd2012family} considered this framework for the solution of problem \eqref{eq:P}. By appropriately choosing the algorithm parameters, several well-known algorithms ({\bf FISTA}~\cite{fista}, {\bf OWL}~\cite{andrgao07}) may be derived from this rather general setting. The orthant-wise method proposed by Byrd et al. \cite{byrd2012family}, for instance, is equivalent to the semismooth Newton updates under the special choice of $\tau$ sufficiently small and
\[
\delta_i=0, \qquad\hbox{for all }i\in \left(\mathcal{N}^k \cap \mathcal{A}^k\right)\cup \left(\mathcal{P}^k\cap \mathcal{A}^k\right).
\]  

In the case of our reduced algorithm, by choosing $\tau=\nicefrac{1}{(\beta \gamma+1)}$ and $\gamma$ sufficiently large, the equivalence with semismooth Newton updates is obtained.
\begin{theorem}
For $\gamma$ sufficiently large and $\tau=\nicefrac{1}{(\beta \gamma+1)}$, the iterations generated by Algorithm \ref{rOESOM} are equivalent to the semismooth Newton iterations obtained through the solution of \eqref{eq:ssn updates}.
\end{theorem}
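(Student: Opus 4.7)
My plan is to establish the equivalence index by index: for each $i$, I verify that the $i$-th equation of the reduced OESOM linear system \eqref{eq:system for reduced oesom} coincides (possibly after rescaling) with the corresponding row of the semismooth Newton update \eqref{eq:ssn updates} for a suitable choice of $\delta_i$. The algebraic cornerstone is the identity
\begin{equation*}
\tau\,\beta\,\gamma = 1-\tau, \qquad \text{valid precisely when } \tau = \tfrac{1}{\beta\gamma+1}.
\end{equation*}
Multiplying the $i$-th OESOM row $\sum_{\ell}(B^k+\beta\Gamma^k)_{i\ell}\,d_\ell = -\widetilde\nabla_i\varphi(x^k)$ by $\tau$ gives $\tau B^k_{i:}\,d+(1-\tau)\,d_i=-\tau\widetilde\nabla_i\varphi(x^k)$ when $\Gamma^k_{ii}=\gamma$, and $B^k_{i:}\,d=-\widetilde\nabla_i\varphi(x^k)$ when $\Gamma^k_{ii}=0$. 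The first form is exactly \eqref{eq:ssn update 4} (and its $\mathcal P^k\cap\mathcal A^k$ counterpart) with $\delta_i=\tau$; the second is the pure-Hessian SSN row used on $(\mathcal N^k\cup\mathcal P^k)\setminus\mathcal A^k$. Thus the matching SSN parameter I aim for is $\delta_i=\tau$ on $\mathcal A^k\cap(\mathcal N^k\cup\mathcal P^k)$.

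The second step is to match the two index partitions. Reduced OESOM splits $I$ into the strongly active set $\mathcal S_k$ (producing the row $d_i=-x_i^k$), the intermediate regime $\{i\not\in\mathcal S_k:|x_i^k|\leq 1/\gamma\}$ (where $\Gamma^k_{ii}=\gamma$), and the exterior regime $\{i\not\in\mathcal S_k:|x_i^k|>1/\gamma\}$ (where $\Gamma^k_{ii}=0$), whereas SSN uses the sets $\mathcal A^k\setminus(\mathcal N^k\cup\mathcal P^k)$, $\mathcal A^k\cap(\mathcal N^k\cup\mathcal P^k)$, and $(\mathcal N^k\cup\mathcal P^k)\setminus\mathcal A^k$. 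I would show that these two partitions coincide for $\gamma$ large enough: the characterization $\mathcal S_k=\{i:x_i^k=0,\,|\nabla_i f(x^k)|\leq\beta\}$ gives precisely the strict interior $\tau(\nabla_i f-\beta)<x_i^k<\tau(\nabla_i f+\beta)$ when $x_i^k=0$; for $|x_i^k|>1/\gamma$, the OESOM threshold places $x_i^k$ outside the $O(1/\gamma)$ strip defining $\mathcal A^k$ and the sign of $x_i^k$ determines whether $i\in\mathcal P^k$ or $i\in\mathcal N^k$; and the intermediate strip is what the Huber Hessian $\Gamma^k$ flags, corresponding to $\mathcal A^k\cap(\mathcal N^k\cup\mathcal P^k)$.

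The third step is to match the right-hand sides. For $i\in\mathcal S_k$, one has $x_i^k=0$, so $d_i=-x_i^k$ reproduces \eqref{eq:ssn update 1}. In the remaining two regimes, $\widetilde\nabla_i\varphi(x^k)$ equals either $\nabla_i f(x^k)+\beta$ (when $x_i^k>0$, or when $x_i^k=0$ and $\nabla_i f(x^k)<-\beta$) or $\nabla_i f(x^k)-\beta$ (the mirror cases), and these two subcases correspond exactly to $\mathcal P^k$ and $\mathcal N^k$ respectively. The right-hand sides $-\widetilde\nabla_i\varphi(x^k)$ and $-\tau\widetilde\nabla_i\varphi(x^k)$ produced by OESOM in the exterior and intermediate regimes then match the SSN right-hand sides $-(\nabla_i f(x^k)\pm\beta)$ and $-\tau(\nabla_i f(x^k)\pm\beta)$, with $B^k$ playing the role of the generalized Hessian $\nabla^2 f(x^k)$.

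The hard part will be the index correspondence of Step~2, because the OESOM threshold $|x_i^k|=1/\gamma$ and the SSN thresholds $x_i^k=\tau(\nabla_i f(x^k)\pm\beta)$ are both of order $1/\gamma$ but differ by a constant depending on $\nabla_i f$. The specific choice $\tau=1/(\beta\gamma+1)$, together with the boundedness of $\nabla f$ along the iterates guaranteed by Assumption \ref{h:f}, is what makes them reconcile in the limit of large $\gamma$; a careful case analysis on the signs of $x_i^k$ and $\nabla_i f(x^k)$ should dispose of all boundary situations. Once the combinatorial matching is in place, the equivalence of the two linear systems, and hence of the two iterates, follows directly from the algebraic identity in Step~1.
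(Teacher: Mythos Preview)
Your approach is essentially the same as the paper's: both rely on the identity $\tau\beta\gamma = 1-\tau$ (equivalently $\delta_i=\tau=1/(\beta\gamma+1)$) and then carry out a row-by-row case comparison between the reduced OESOM system and the SSN updates. The only organizational difference is that the paper runs through the five SSN index sets and opens with the sign-preservation fact \eqref{eq: ssn equal sign}---which is exactly the lemma that disposes of your Step~2 ``hard part''---whereas you partition from the OESOM side; the algebraic content is otherwise identical.
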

\begin{proof}
By choosing $\tau=\nicefrac{1}{(\beta \gamma+1)}$ and $\gamma$ sufficiently large we obtain that 
\begin{equation} \label{eq: ssn equal sign}
\hbox{sign}\left(x_i^k -\tau \left(\nabla_i f(x^k) + \hbox{sign}(x_i^k)\beta\right)\right)=\hbox{sign}(x_i^k)\quad \text{ for all } i: x_i^k\neq0.
\end{equation}
The latter implies that if $x_i^k\in\mathcal{A}^k$, then $x_i^k=0$. Indeed, if sign$(x_i^k)=-1$ then \eqref{eq: ssn equal sign} implies that $x_i^k - \tau\left(\nabla_i f(x^k) - \beta\right)< 0$ and, consequently, $i\in \mathcal{N}^k\setminus\mathcal{A}^k$. In a similar way, if $x_i^k>0$ then $i\in \mathcal{P}^k\setminus\mathcal{A}^k$. \\

Concerning the updates: \\
\begin{itemize}
\item If $i\in \mathcal{A}^k\setminus \left(\mathcal{N}^k \cup \mathcal{P}^k\right)$, then by~\eqref{eq:ssn update 1}
\[
d_i^k = - x_i^k =0.
\]
On the other hand, in our method this case corresponds to $z_i^k=0$, which thanks to the incorporation of the projection in the building of the reduced second order matrix implies that
\[
x_i^{k+1}=x_i^k=0.
\] 
\item If $\hbox{sign}(x_i^k)=-1$, then $i\in \mathcal{N}^k\setminus\mathcal{A}^k$ and the \ssn update corresponds to
\[
\nabla^2_{i:}f(x^k)d^k=-\left(\nabla_i f(x^k) + \beta\right),
\]
which is similar to the update of the reduced \row algorithm.
\item If $\hbox{sign}(x_i^k)=1$ , then $i\in \mathcal{P}^k\setminus\mathcal{A}^k$ and
\[
\nabla^2_{i:}f(x^k)d^k=-\left(\nabla_i f(x^k) - \beta \right),
\]
which corresponds to the update of our reduced algorithm.
\item If $i\in \mathcal{N}^k\cap \mathcal{A}^k$, then our algorithm yields the update
\begin{equation}\label{eq:ast1}
\left(\nabla^2_{i:} f(x^k) + \beta \gamma e_i^T\right)d^k= - \left(\nabla_i f(x^k) - \beta\right), 
\end{equation}
which corresponds exactly to the \ssn update~\eqref{eq:ssn update 4} with the choice $\delta=\nicefrac{1}{(\beta \gamma+1)}$. 
Indeed,
\begin{align*}
\left(\delta_i \nabla^2_{i:} f(x^k) + (1-\delta_i)e_i^T\right)d&{}= - \tau\left(\nabla_i f(x^k) - \beta\right)\\
\Leftrightarrow \left(\dfrac{1}{\beta \gamma+1}\nabla^2_{i:}f(x^k) + \dfrac{\beta \gamma}{\beta \gamma+1} e_i^T\right)d&{}=-\dfrac{1}{\beta \gamma+1}\left(\nabla_i f(x^k) - \beta \right),
\end{align*}
which coincides with~\eqref{eq:ast1}. The case $i\in \mathcal{P}^k \cap \mathcal{A}^k$ follows in a similiar way.
\end{itemize}
\end{proof}\\

Consequently, the updates of the reduced \row and the \ssn method coincide for the special choice  $\tau=\delta=\dfrac{1}{\beta \gamma+1}$. Let us remark that for this to occur, full steps ($s=1$) have to be performed.\\

From the semismooth interpretation also an adaptive strategy for the regularization parameter can be divised. Indeed, we may choose $\tau$ such that
\[
\hbox{sign}\left(x_i^k - \tau \left(\nabla_i f(x^k) + \hbox{sign}(x_i^k)\beta\right)\right)=\hbox{sign}(x_i^k),\quad \forall i \colon x_i^k\neq0
\]
is satisfied. This leads to the condition
\[
\tau < \dfrac{|x_i^k|}{|\nabla_i f(x^k) + \hbox{sign}(x_i^k)\beta|},\qquad\forall i\colon x_i^k\neq 0.
\]
Considering the choice $\tau=\delta=\nicefrac{1}{(\beta \gamma+1)}$ of our method, an adaptive choice of $\gamma$ is given by
\[
\gamma > \dfrac{|\nabla_i f(x^k) + \hbox{sign}(x_i^k)\beta|}{\beta |x_i^k|} - \frac{1}{\beta},\qquad\forall i\colon x_i^k\neq 0,
\]
or, more conservatively, 
\begin{equation}\label{eq:gammacond}
\gamma=\max_{\{i: \, x_i^k \neq 0\}}\left( \dfrac{|\nabla_i f(x^k) + \hbox{sign}(x_i^k)\beta|}{\beta |x_i^k|}\right)
\end{equation}
The adaptive reduced \row algorithm is obtained by using equation \eqref{eq:gammacond} in step 3 of Algorithm \ref{rOESOM}.

\section{Numerical experiments}
The computational study of our algorithm is divided in two sets of numerical experiments. The first set of problems is intended to compare \row with other state--of-the--art methods for solving $\ell_1$ penalized problems. Specifically, we compare our method with other second order methods as the Orthant--wise Limited--memory Quasi--Newton method \owl \cite{andrgao07}, Newton--CG algorithm  \nwcg \cite{byrd2012family}, primal--dual Newton--CG ({\bf pdNCG}) \cite{fougonz13}. In addition, we compare our method with the popular first--order algorithm Fast Iterative Shrinkage--thresholding Algorithm ({\bf FISTA}) which is an accelerated proximal method (see, e.g., \cite{fista}).

The second set of experiments focuses on the numerical properties of ({\bf OESOM}). Monotonicity properties are investigated by means of the strong active sets defined by $z^k$. We also consider a numerical continuation strategy for the solution of the problem with different levels of sparsity associated to the parameter $\beta$, as this is frequently used.

The experiments show that \row is competitive compared to other state--of--the--art methods, and in many cases is able to improve convergence in terms of number of iterations and/or execution time. We emphasize that \owl and \nwcg use second--order information from the regular part only. In the case of ({\bf pdNCG}), a second order regularization of the $\ell_1$--norm is used from the beginning and, consequently, a regularized version of the original problem is actually solved. In our algorithm the second order matrix of the regular part is modified by $\Gamma^k$ (see \eqref{eq:direction}), which has an scaling effect on the directions associated with those components that are potentially active. We observe that this feature boosts the active set identification process.

The most computationally expensive step of our algorithm is the solution of the system \eqref{eq:direction}. We address this issue by applying Krylov methods, such as Arnoldi~\cite{saad1981krylov}, which exploit the structure of the matrix $B^k + \beta\Gamma^k$ and lead to a fast iterative solution of the system. This is studied in depth in Subsection 6.2, where an inexact version of \row is also considered. Section \ref{s:roesom} is devoted to the numerical testing of the reduced version of \row developed in Section \ref{s:roesom} which considerably reduces the size of the associated linear system \eqref{eq:direction}. 

\subsubsection*{Implementation aspects} 
%In the experiments, we have implemented the \row method, prescribed by Algorithm 1, without the reduction strategy described in Section \ref{s:roesom}. The reduced version of \row is investigated in Section \ref{s:roesom}.  
The code for \owl was implemented in MATLAB according to the original paper~\cite{andrgao07}. %with full BFGS matrix updates instead of the limited-memory strategy proposed oriinally. This does not affect the computational cost, but only the storage requirements. Hence, it does not have an effect on the performance tested in our experiments. 
The (\textbf{NW-CG}) algorithm was also implemented in MATLAB according to the original paper \cite{byrd011}.
%The number of variables used in this kind of experiment is smaller than the ones considered in machine learning problems. For that reason we consider the algorithm OWL without L-BFGS strategy, as originally designed, but with the full BGFS technique for the Hessian matrix approximation. We also consider the algorithm NW--CG using the BFGS approximation and not with the exact Hessian. 
In the case of (\textbf{FISTA}), we have used the code available in the \emph{Toolbox of Sparse Optimization} from the Matlab File Exchange repository. %: \url{http://www.mathworks.com/matlabcentral/fileexchange/16204-toolbox-sparse-optmization/content/toolbox_optim}. 
The  MATLAB code for ({\bf pdNCG}), was publicly provided by the authors (\url{http://www.maths.ed.ac.uk/ERGO/pdNCG/}). Unless otherwise stated, in our experiments we have used a pseudo--Huber regularization parameter $\mu=1e-7$.

For measuring the numerical efficiency of the different algorithms, we have considered both the number of iterations and the execution time. In particular, the number of iterations is relevant in the context of PDE-constrained optimization, where at every step two partial differential equations (state and adjoint equations) must be solved to compute descent directions and evaluate the objective function. 
For the stopping criterion, in general, we use
\[
\| \widetilde\nabla \varphi (x)\|_{\infty}< \eta, 
\]
for a given tolerance $0 < \eta \ll 1$. However, we use other suitable criteria when comparing with other algorithms, since the provided codes for other algorithms do not necessarily use the same stopping criteria. %Moreover, we select { $\gamma= 1e4$} as the Huber regularization parameter.
In each experiment, if necessary, we specify how the stopping criteria, the parameters and function values are chosen. 

We mention that for the numerical approximation of the partial differential equations involved, a standard finite difference discretization scheme was used, where the Laplacian was approximated using the five points stencil~\cite{quarteroni2010numerical}.

%%%%%%%%%%%%%%%%%%%%%%%%%%%%%%%%%%%%%%%%%%%%%%%%%%%%
%
%        FINAL EXPERIMENTS
%%%%%%%%%%%%%%%%%%%%%%%%%%%%%%%%%%%%%%%%%%%%%%%%%%%
\subsection{Numerical comparison with other state--of--the--art algorithms}
\subsubsection{Randomly generated linear-quadratic optimization problems}\label{s:lassoex}
For our first experiment, we consider the classical least--squares sparse problems (LASSO). These problems are randomly generated according to the procedure developed in \cite[Section 6]{nesterov}. LASSO problems have the following form:  
%Here, a matrix $A\in \mathbb{R}^{m\times n}$ and a vector $b\in \mathbb{R}^m$ are generated randomly, such that the exact solution is known.
\[
\min_{x\in \mathbb{R}^n} \varphi(x)=\dfrac12 \parallel Ax-b\parallel_2^2 + \| x\|_1,
\]
where $A\in \mathbb{R}^{m\times n}$  and $b\in \mathbb{R}^m$, with $m\geq n$ to assure that the problem has a unique solution. In this procedure, we select  $m^*<m$ as number of sparse components in the solution $x^*$, and its objective function value $\varphi^\ast$.
%, which has the form
%$$ x^*_i = 
%\left\{
%\begin{array}{ll}
%\xi_i \,\sign ( a_i^\top y^*) & i<m^*, \\
%0 & \hbox{otherwise},	
%\end{array}
%\right.
%$$
%where $\xi_i$ and other involved quantities are uniformly distributed in some interval depending on $m^*$. The matrix $A$ is generated \nota{no cacho} with  $b=y^*-Ax^*$ such that $\varphi$ attains its minimum at
%\[
%\varphi^{\ast}=\dfrac12 \parallel y^\ast\parallel^2_2 + \parallel x^\ast\parallel_1.
%\]
For more details on the construction and generation of these problems, we refer to \cite[Section 6]{nesterov}. 

We consider 6 sets of problems of different sizes. For each size we generate 10 random problems to compare with the algorithms mentioned at the beginning of this section. Since the third--party codes use different stopping criteria, and we cannot always guaranteed to satisfy an exact cost value, we have run the algorithms until:
\[
|\varphi^\ast-\varphi(x^k)| \leq 10^{-5},
\]
In our experiments, all the algorithms reach the stoping criteria with an error $\norm{x^k -x^*}_2$ of order $10^{-4}$.
Table~\ref{tab:SLS_iter} sums up our numerical findings for each experiment where the mean and the standard deviation of the number of iterations and execution time, show that \row is able to outperform the others. For the OWL algoritm we fix to 20 the number of vectors for the L--BFGS method. %It should be noticed that the third--party code for {\bf(FISTA)} uses a fixed number of iterations, that we specify in Table ~\ref{tab:SLS_iter} and therefore standard deviation of the number of iterations is always 0 for this algorithm. \nota{No se entiende bien como se puede comparar con el FISTA de esa forma.}
%\nota{De las tablas, en los casos en que el OWL tiene menos iteraciones es inexplicable que demore más. Qué pasa con eso?}

\begin{table}
\scriptsize
\centering
\begin{tabular}{|c|c|c|c|c|c|c|c|c|c|c|c|}
\hline
\multicolumn{2}{|c|}{} & \multicolumn{10}{c|}{Number of iterations}                                                                                          \\ \hline
\multicolumn{2}{|c|}{Size of $A$}       & \multicolumn{2}{c|}{OESOM} & \multicolumn{2}{c|}{NW--CG} & \multicolumn{2}{c|}{OWL} & \multicolumn{2}{c|}{pdNCG} &  \multicolumn{2}{c|}{FISTA} \\ \hline
$m$             & $n$            & MEAN        & SDV       & MEAN         & SDV          & MEAN       & SDV         & MEAN         & SDV         & MEAN   &SDV \\ \hline
400  & 200     & 8.20      & 1.475    & 271.60         & 30.685        & 15.40       & 4.948      & 13.90           & 1.595        & 1000     &105.4093 \\ \hline 
800  & 400      & 8.60     & 0.966    & 291.40         & 38.982        & 16.80       & 3.293      & 14.20           & 1.316        & 1430     &122.927\\ \hline
1200 & 600     & 8.80     & 1.135     & 193.50        & 162.710      & 19.70       & 7.409      & 14.30           & 0.674       & 1902      &127.021\\ \hline
1600 & 800     & 9.70     & 0.948     & 235.30        & 128.154      & 17.70       & 3.093      & 15                & 0.667       & 2300      &246.080\\ \hline
2000 & 1000   & 11.30    & 3.772    &271.50         & 98.592        & 17.80       & 2.658      & 13.60           & 0.699       & 2883      &193.278\\ \hline
2400 & 1200   & 14.90    & 5.782   & 304.50        & 49.996         & 21.40       & 6.963      & 14.30           & 1.337        & 3085     &158.201\\ \hline
\end{tabular}
\caption{Comparison of mean and standard deviation of the number of iterations for the random LASSO problems }
\label{tab:SLS_iter}
\end{table}

\begin{table}
\scriptsize
\centering
\begin{tabular}{|c|c|c|c|c|c|c|c|c|c|c|c|}
\hline
\multicolumn{2}{|c|}{} & \multicolumn{10}{c|}{Time (s)}                                                                                          \\ \hline
\multicolumn{2}{|c|}{Size of $A$}       & \multicolumn{2}{c|}{OESOM} & \multicolumn{2}{c|}{NW--CG} & \multicolumn{2}{c|}{OWL} & \multicolumn{2}{c|}{pdNCG} & \multicolumn{2}{c|}{FISTA} \\ \hline
$m$             & $n$           & MEAN         & SDV          & MEAN       & SDV         & MEAN         & SDV         & MEAN  & SDV&MEAN&SDV \\ \hline
400             & 200            & 0.0881        & 0.0576       & 1.5984         & 0.4235        & 0.1978         & 0.1236       & 2.5302         & 0.5611         & 0.2908   & 0.0371\\ \hline
800             & 400            & 0.1825        & 0.0251       & 6.0516         & 0.9539        & 0.3954         & 0.0641       & 12.5289       & 2.5771         & 1.3778   & 0.1496\\ \hline
1200            & 600           & 0.4485        & 0.0560       & 7.7453         & 6.7164        & 0.9660         & 0.1612       & 54.0916        & 14.0477        & 2.4756   & 0.2478\\ \hline
1600            & 800           & 1.0372        & 0.0897       & 20.6573       & 11.7550        & 2.4216        & 0.3841       & 130.9736      & 17.9134      & 3.2964   &0.4936\\ \hline
2000            & 1000         & 2.5998        & 0.9613       & 51.0575       & 19.1155      & 4.7951          & 0.7360       & 305.1327      & 95.1593      & 7.4477   &1.1067\\ \hline
2400            & 1200         & 5.1059       & 1.9218        & 85.0013       & 18.3755      & 7.2795          & 0.7749       & 442.3785      & 121.0738      & 15.0734 &3.9490\\ \hline
\end{tabular}
\caption{Comparison of mean and standard deviation of the execution time for the random LASSO problems }
\label{tab:SLS_iter2}
\end{table}

\subsubsection{PDE-constrained optimization}\label{s:PDE-test}
Let us now consider the following linear-quadratic optimal control problem:
\begin{equation}
\tag{OCP} \label{e:OCP}
\begin{cases}
\displaystyle\min_{(y,u)} ~\frac{1}{2}\| y-y_d \|^2_{L^2(D)}+\frac{\alpha}{2}\|u\|^2_{L^2(D)}+\beta\| u\|_{L^1(D)}\\
\hbox{ subject to }\\
\hspace{40pt}\begin{array}{cl}
- \nu \Delta y=u &\hbox{in  } D:=(0,1)\times (0,1), \\
y=0 &\hbox{on  } \partial D.
%\dfrac{\partial y}{\partial \vec{n}}=0&\hbox{sobre  } \Gamma. %, $\gamma= 1e4$,
\end{array}
\end{cases}
\end{equation}
The numerical solution of this type of problems is an interesting experiment for testing our algorithm since these kind of problems involve a high computational cost in every iteration, due to the presence of the PDE--constraints which requires the numerical solution of partial differential equations in order to compute the gradient of $f$ in each iterate.

For this problem, we consider the following parameter values $\nu=1$,  $\alpha=2e-5$, $\beta=9.4e-4$ and $y_d:=\sin(4\pi x)\cos(8\pi y)\exp(2\pi x)$. We use a \emph{discretize-then-optimize} approach, transforming the original infinite-dimensional optimization problem into a finite-dimensional one in the form of $\mathbf{(P)}$. This is done by means of the finite difference method. In the following experiments we have discretized the unit square using a uniform grid with 3600 internal nodes. 

We compare the performance of \row with respect to other three different methods. For the purpose of comparison, we have considered a target value of the cost function as a stopping criterion:
\begin{equation}\label{eq:PDE_stop_crit1}
\varphi(x^k)<1.5637,
\end{equation}
and measure the execution time and number of iterations that each of the algorithms needs to reach the prescribed cost value. The obtained numerical results are shown in Table~\ref{tab:PDE_experiment1}, where we observe that \row requires fewer iterations to converge.

%The numerical evidence in Table~\ref{tab:PDE_experiment1} shows \row is very efficient for solving PDE--constrained problems, where we observe it needs few iterations, which means avoiding to solve too many PDEs. 
%On the other hand, pdNCG requires less iterations, however it has a high computational cost per iteration because its line search strategy.

\begin{table}
\scriptsize
\centering
\begin{tabular}{| c |c | c | c|}
\hline
\textbf{Algorithm}& \textbf{Number of Iterations}&  \textbf{Time (s)} & \textbf{Cost function}  \\ \hline
OESOM & 10&  2.94 & 1.5636 \\ \hline
NW--CG & 347&  37.18&  1.5636\\  \hline
OWL & 19 & 10.90 & 1.5636\\  \hline
%pdNCG& 5 &523.797& 1.5636\\ \hline
FISTA& 700&155.68&1.5636\\ \hline
\end{tabular} 
\caption{Comparison of execution time and number of iterations for PDE-constrained optimization problems.}
\label{tab:PDE_experiment1}
\end{table}

%\nota{Nuevamente, el OWL tiene solo el doble de iteraciones, pero 3 veces el tiempo de ejecución. No tiene sentido.}

% \nota{parametro pHuber = ? para pdNCG: siempre se trabaja con el valor por defecto $1e-7$}

Figure \ref{fig:exp2} illustrates the active set evolution of \row, \owl and (\textbf{NW--CG}) along their iterations. The null components of $z^k$ are depicted in Figure \ref{fig:exp2_01} and the decay of the objective function is shown in Figure \ref{fig:exp2_03}. It can be noticed that \row seems to be faster at identifying the active sets. This feature plays a significant role in computing the optimal control. In addition, in several cases we observe that \row algorithm attains a smaller value of the objective function as is shown in Table \ref{tab:exp2_01}.

%\begin{figure}[H]
%\includegraphics[width=7.2cm,height=7cm]{imagenes/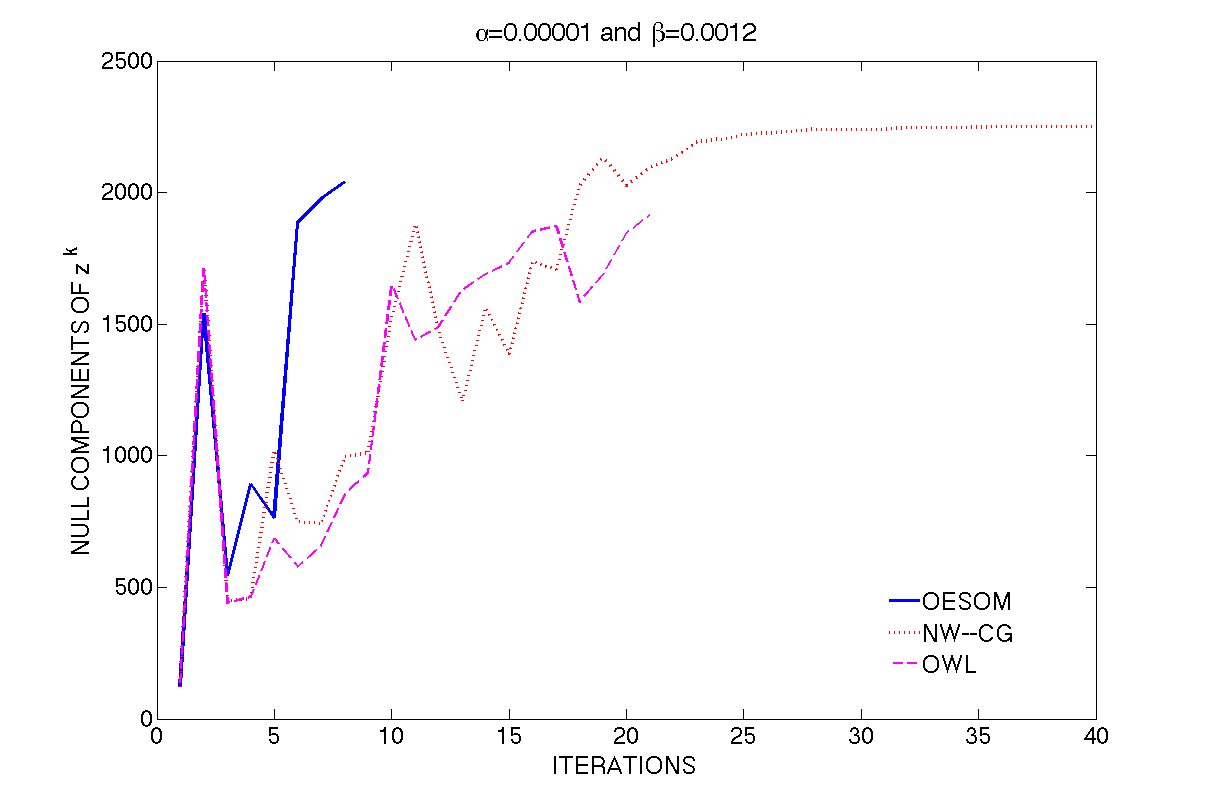}\hfill \includegraphics[width=7.2cm,height=7cm]{imagenes/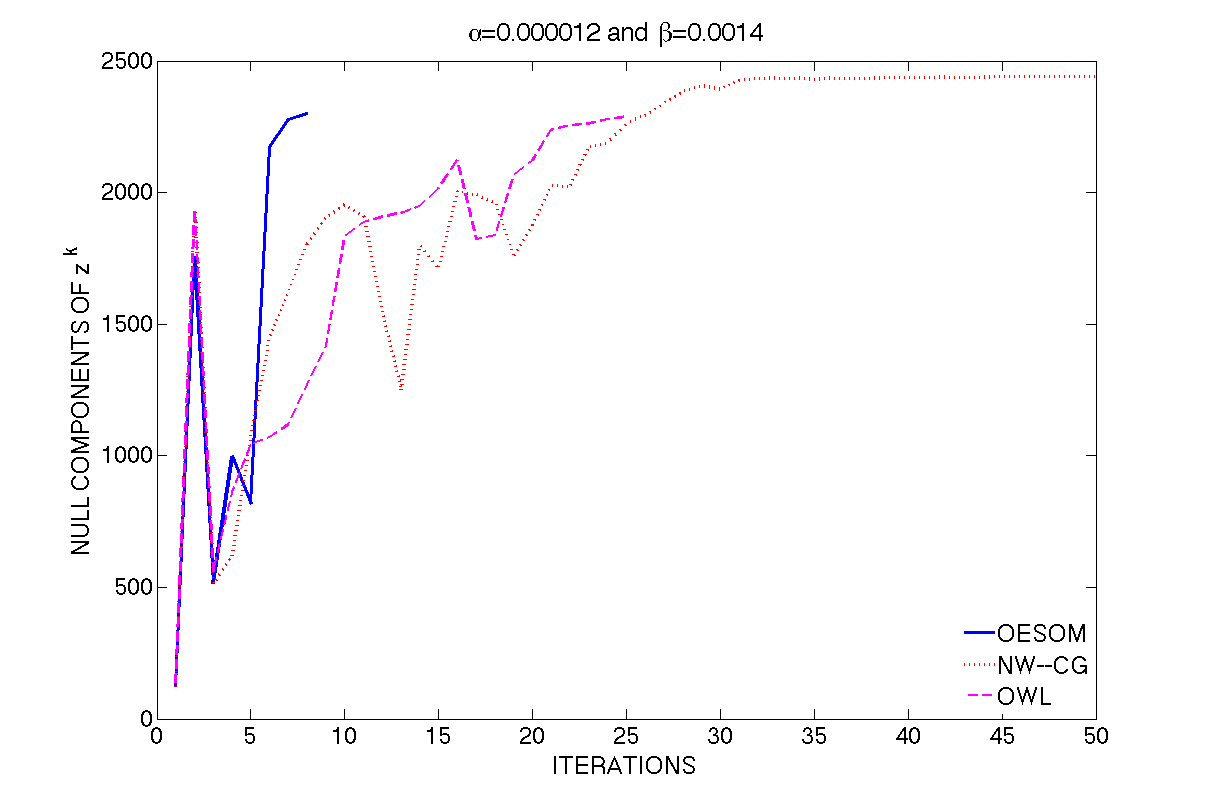}\hfill \includegraphics[width=7.2cm,height=7cm]{imagenes/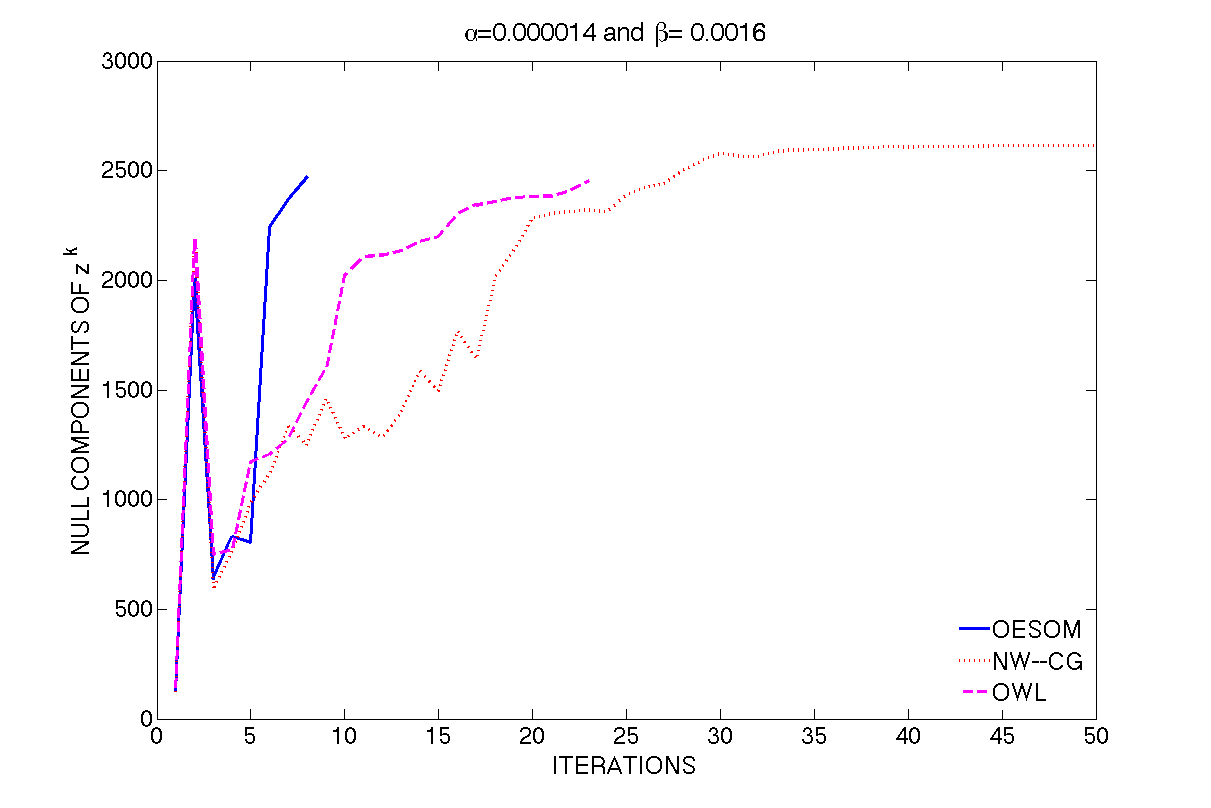}\hfill 
%\includegraphics[width=7.2cm,height=7cm]{imagenes/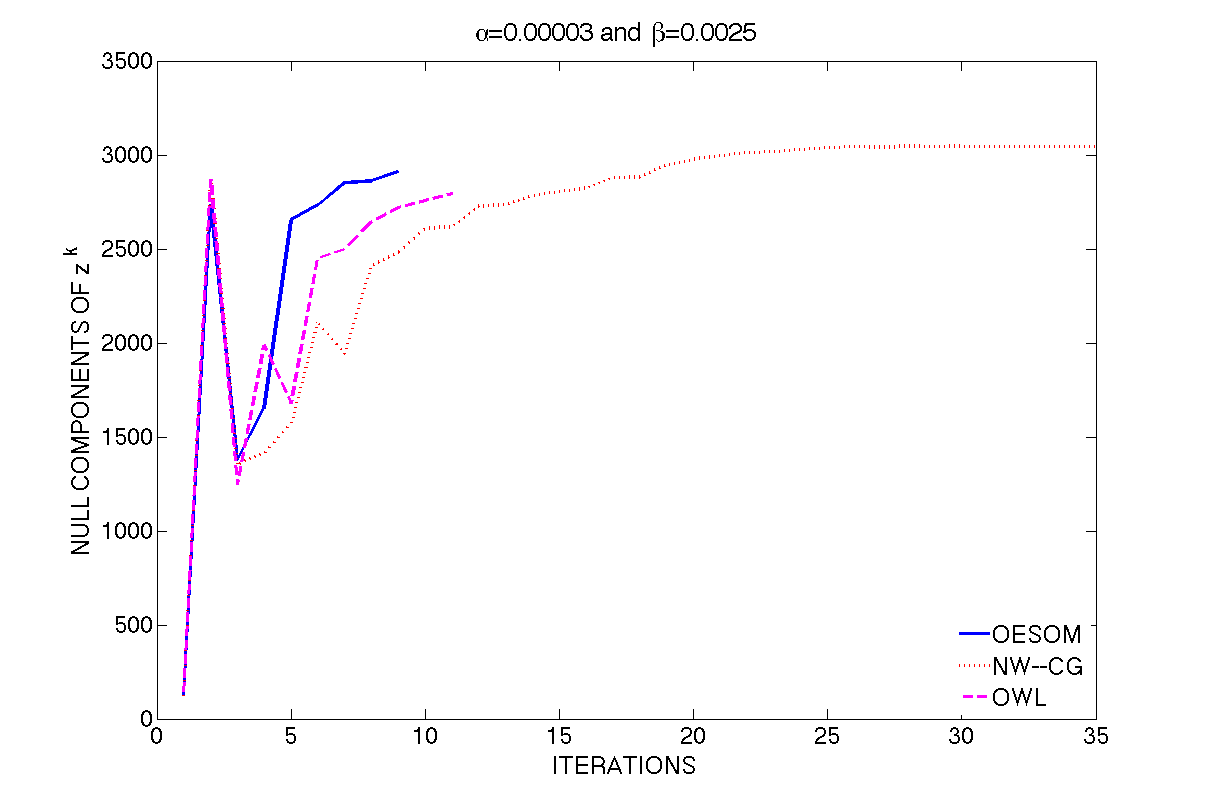}
%%\includegraphics[width=5cm,height=5cm]{FIGURE_ACTIVESETS_EXPERIMENT4.png}\hfill \includegraphics[width=5cm,height=5cm]{FIGURE_ACTIVESETS_EXPERIMENT5.png}
%\figuretopcapfalse
%\end{figure}

\begin{figure}[h]
        \centering
        \begin{subfigure}{0.5\textwidth}
                \centering
                \includegraphics[height=5.5cm,width=6.5cm]{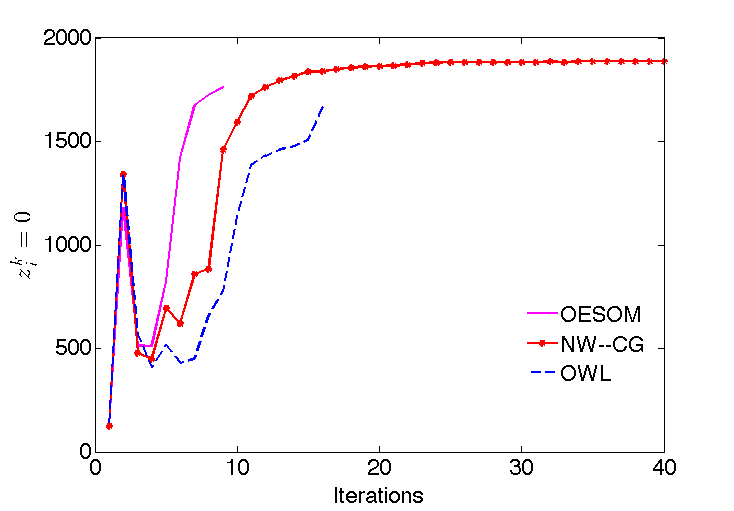}
                \caption{Null component of $z^k$.}
                \label{fig:exp2_01}
        \end{subfigure}%
 \begin{subfigure}[H]{0.5\textwidth}
                \centering
                \includegraphics[height=5.5cm,width=6.5cm]{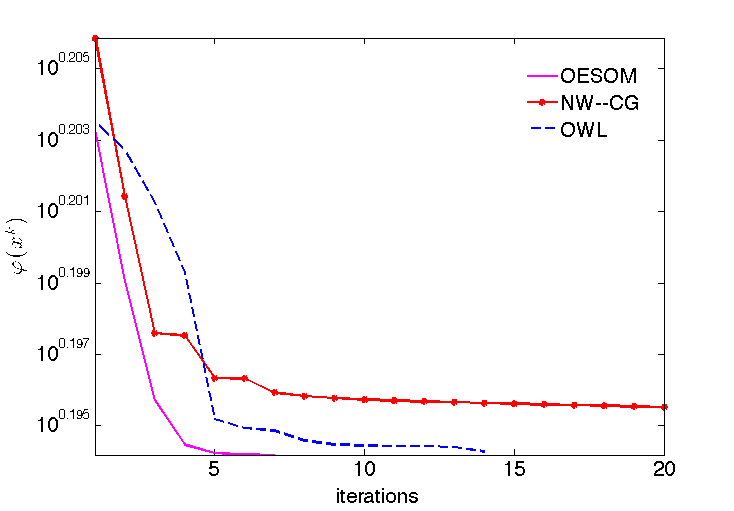}
                \caption{Cost function.}
                \label{fig:exp2_03}
        \end{subfigure}%   
\caption{Performance for PDE-constrained optimization problems}
\label{fig:exp2}
\end{figure}
        
The numerical results for this type of PDE-constrained optimization problems are strongly affected by the choice of the diffusion parameter $\nu$ and the weight $\beta$. For small values of $\nu$ and $\beta$ close to its critical value $\beta_0$, the solution becomes more sparse and harder to obtain. To evaluate the performance for such cases, we test the algorithms \owl, \nwcg and \row for different combinations of $(\nu, \beta) \in [0.0743,0.6] \times [0.0120,0.02] $ on a grid of 625 points and plot the corresponding number of iterations in Figure~\ref{fig:exp4_03}. Dark red stands for a high number of iterations (100) and blue for a small number according to the color scale. It can be observed that for this set of experiments, in general \row needs less iterates than the other two to reach the solution and exhibits a robust behaviour with respect to the parameters. Some of the results are also presented in Table \ref{tab:exp2_01}, including ({\bf FISTA}).

\begin{figure}[h] 
\centering
\includegraphics[width=4.7cm,height=4.3cm]{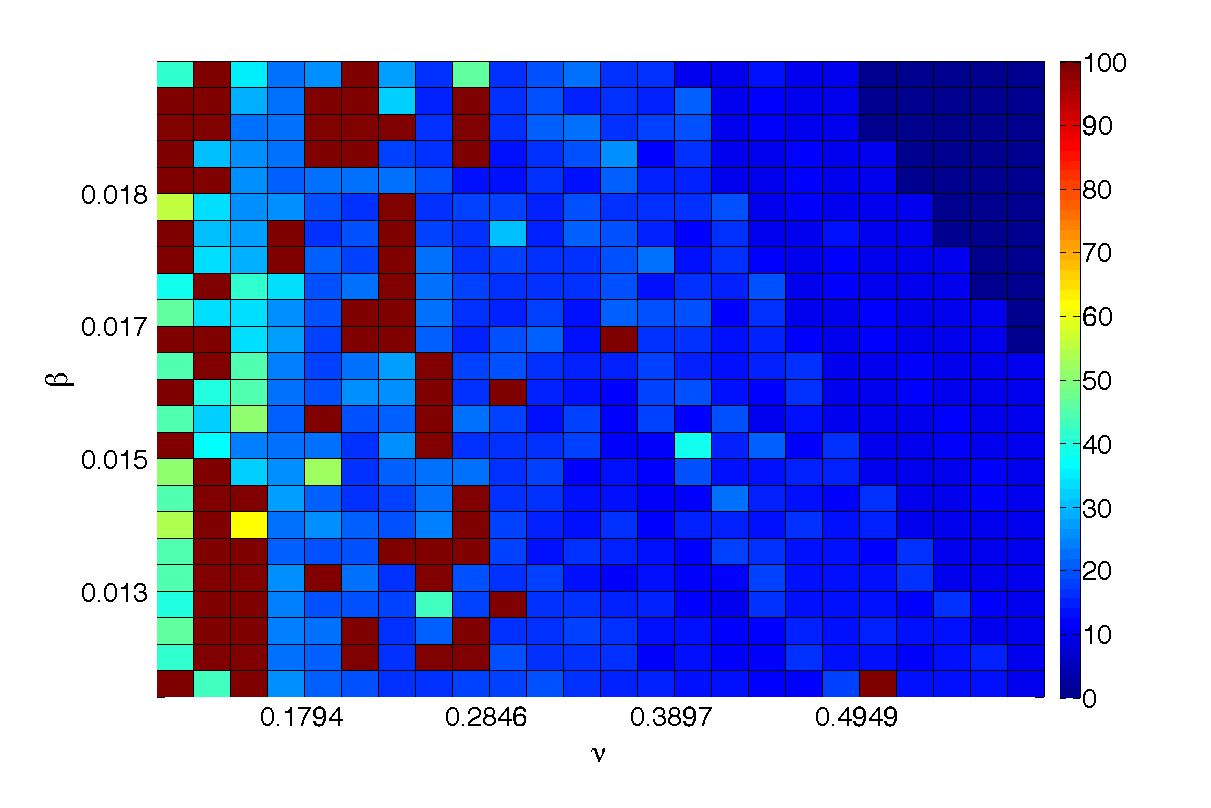} \hfill \includegraphics[width=4.7cm,height=4.3cm]{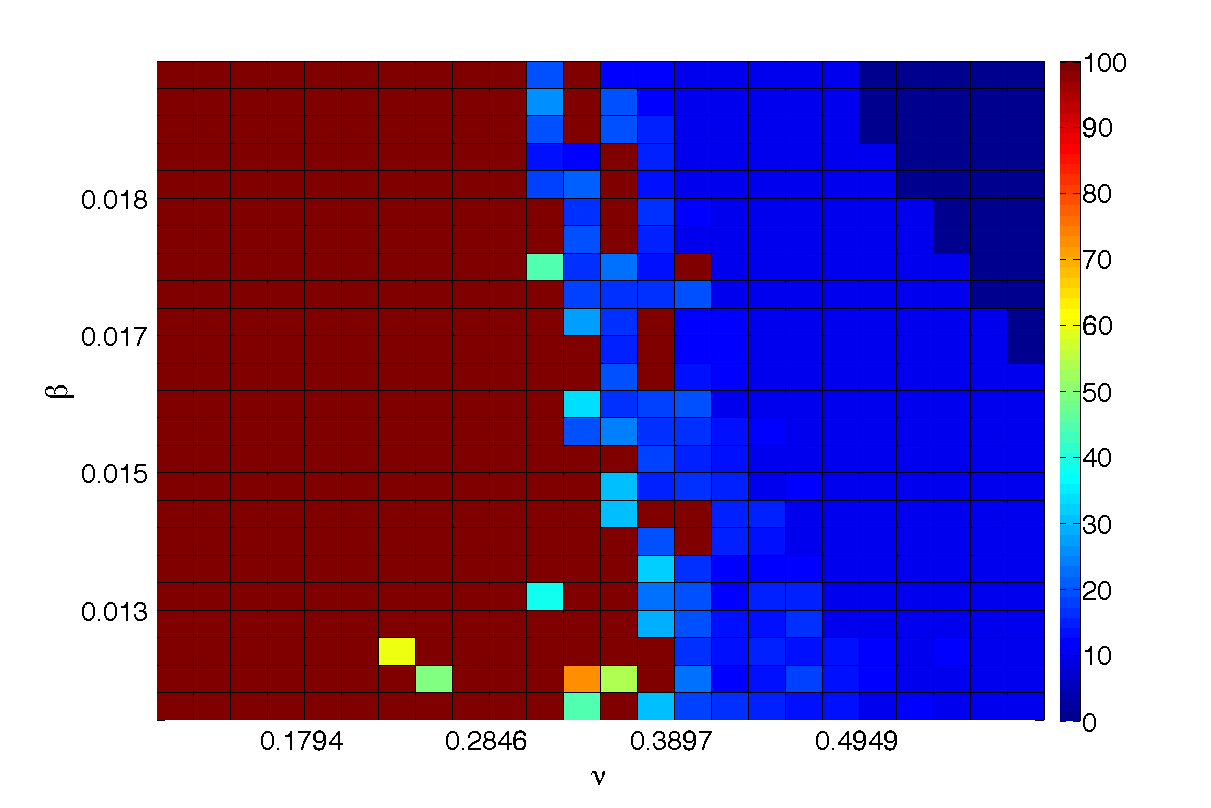} \hfill \includegraphics[width=4.7cm,height=4.3cm]{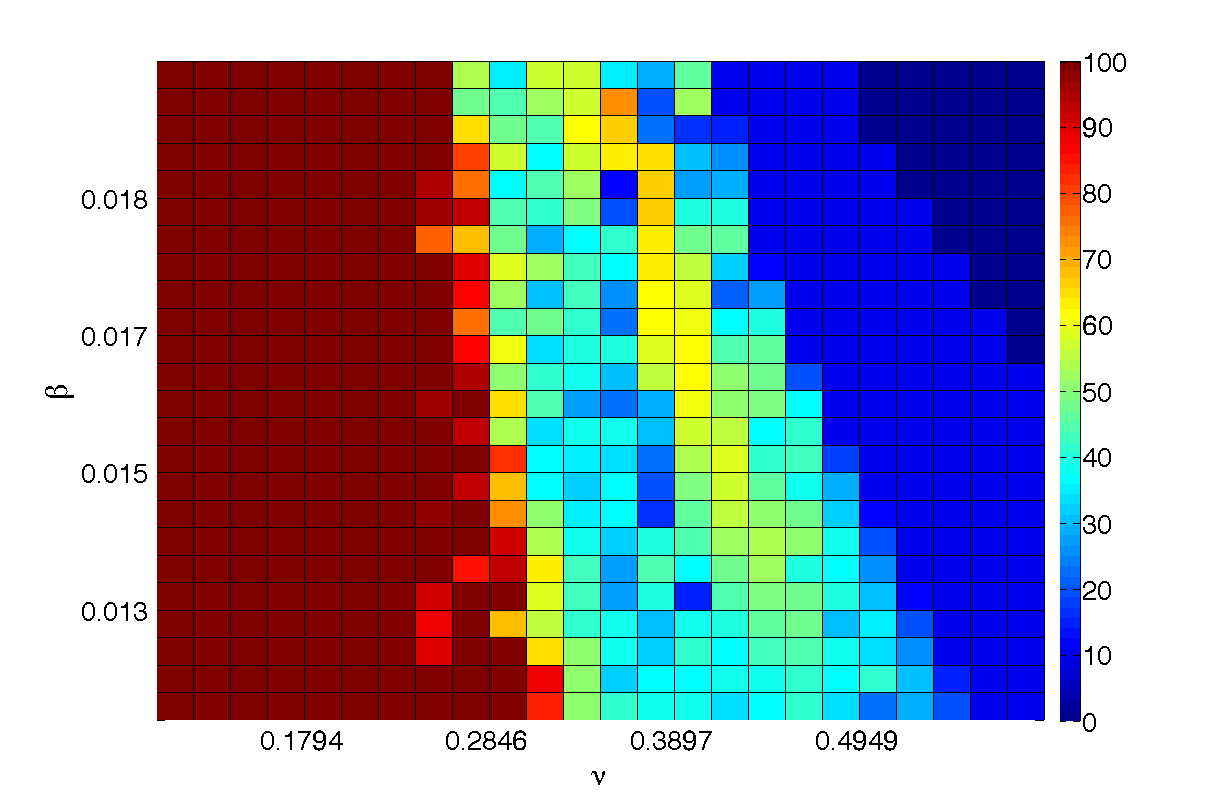}
\caption{Number of iterations: OESOM (left), NW--CG (center), OWL (right). Dark red is for high number for iterations and blue for low ones.}
\label{fig:exp4_03}
\end{figure}
%The next table shows the results of this experiment for some of the regularization parameters $\alpha$ and $\beta$.

\begin{table}[H]
\centering
%\scriptsize{
\tiny{
\renewcommand{\arraystretch}{1.2}
\begin{tabular}{|c|l|c|c|c|c|c|c|c|c|c|c|c|c|}
\hline
\multicolumn{2}{|c|}{{Regularization}}       & \multicolumn{4}{c|}{ITERATIONS}                                        & \multicolumn{4}{c|}{TIME (s)}                                                & \multicolumn{4}{c|}{COST FUNCTION}                               \\ \cline{3-14} 
\multicolumn{2}{|c|}{ Parameters}                                  & OE               & NC               & OW        &FT         & OE                  & NC                 & OW         &FT           & OE                 & NC                & OW          &FT         \\ \hline
$\alpha$                 & 1e-5                   & \multirow{2}{*}{8} & \multirow{2}{*}{100}  & \multirow{2}{*}{21} &\multirow{2}{*}{470}& \multirow{2}{*}{3.08} & \multirow{2}{*}{31.40} & \multirow{2}{*}{7.47} &\multirow{2}{*}{150.62}& \multirow{2}{*}{1.5263} & \multirow{2}{*}{1.5376} & \multirow{2}{*}{1.5267}&\multirow{2}{*}{1.5263} \\ \cline{1-2}
$\beta$    & 0.0012 &                     &                      &                     &                        &                        &                        &                       &                       &                    &           &          &                 \\ \hline
$\alpha$ & 1.2e-5                    & \multirow{2}{*}{8} & \multirow{2}{*}{100}  & \multirow{2}{*}{25} & \multirow{2}{*}{462}& \multirow{2}{*}{2.97} & \multirow{2}{*}{29.05} & \multirow{2}{*}{7.43} &\multirow{2}{*}{142.87}& \multirow{2}{*}{1.5515} & \multirow{2}{*}{1.5564} & \multirow{2}{*}{1.5524}&\multirow{2}{*}{1.5515} \\ \cline{1-2}
$\beta$       & 0.0014                      &                     &                      &                     &                        &                        &                        &                       &                       &            &         &      &           \\ \hline
$\alpha$ & 1.4e-5                     & \multirow{2}{*}{8} & \multirow{2}{*}{100} & \multirow{2}{*}{23} & \multirow{2}{*}{470}& \multirow{2}{*}{2.92}  & \multirow{2}{*}{29.79} & \multirow{2}{*}{7.39} & \multirow{2}{*}{160.23}&\multirow{2}{*}{1.5695} & \multirow{2}{*}{1.5763} & \multirow{2}{*}{1.5700} &\multirow{2}{*}{1.5696}\\ \cline{1-2}
$\beta$            & 0.0016                      &                     &                      &                     &                        &                        &                        &                       &                       &                 &      &     &      \\ \hline
%\multirow{2}{*}{4} & $\alpha$=0.000008                     & \multirow{2}{*}{11} & \multirow{2}{*}{100} & \multirow{2}{*}{41} & \multirow{2}{*}{4.9}   & \multirow{2}{*}{42.07} & \multirow{2}{*}{11.35} & \multirow{2}{*}{1.48} & \multirow{2}{*}{1.50} & \multirow{2}{*}{1.48} \\ \cline{2-2}
  %                 & $\beta$=0.001                        &                     &                      &                     &                        &                        &                        &                       &                       &                       \\ \hline
$\alpha$ & 3e-5 & \multirow{2}{*}{9}  & \multirow{2}{*}{100} & \multirow{2}{*}{11} & \multirow{2}{*}{470}& \multirow{2}{*}{3.87}  & \multirow{2}{*}{14.17} & \multirow{2}{*}{7.06} & \multirow{2}{*}{154.26}&\multirow{2}{*}{1.6149} & \multirow{2}{*}{1.6154} & \multirow{2}{*}{1.6150} &\multirow{2}{*}{1.6149} \\ \cline{1-2}
      $\beta$  & {0.0025} &                     &                      &                     &                        &                        &                        &                       &                       &                 &     &   &      \\ \hline
\end{tabular}}

\caption{Comparison with different regularization parameters for PDE--constrained problems. OE stands for OESOM, NC for NW--CG, OW for OWL and FT for FISTA. 
}
\label{tab:exp2_01}
\end{table}

In Table \ref{tab:gamma_experiment} the results of this experiment for different values of the Huber regularization parameter $\gamma$ are registered. The behaviour of the algorithm appears to be robust with respect to the parameter.
\begin{table}[H]
\centering
\scriptsize
\begin{tabular}{|c|c|c|c|}
\hline
$\gamma$ & Iterations & Execution Time (s) & Cost Function \\ \hline
1e3      & 13   & 4.04 & 1.5642             \\ \hline
1e4      & 8    & 2.11 & 1.5641             \\ \hline
1e5      & 14   & 4.64 & 1.5647             \\ \hline
%Automatic & 9    & 2.52 & 1.5646             \\ \hline
\end{tabular}
\caption{Performance of \row for different values of $\gamma$.}
\label{tab:gamma_experiment}
\end{table}

%Concerning failures, i.e., number of experiments where the algorithms reached the maximum number of iterations without converging: \row fails in 74 out of the 625 experiment, \nwcg 300 and \owl 203. 

\subsubsection{Machine learning: dataset training problem}
Many problems arising in machine learning involve a training step (for a given dataset) in order to determine the parameters of a certain model for data classification or feature selection (see, e.g., \cite{sra2012optimization}). This training step consists in solving an optimization problem of the form \eqref{eq:P}:
\begin{equation}
\min_{x\in \mathbb{R}^{m}} \ell(x) + \beta \| x\|_1.
\end{equation}
Such problems typically involve a multi-class logistic function $\ell$, known as loss function, that represents the normalized sum of the negative log likelihood of each data point being placed in the correct class \cite{collins2005discriminative,minka2003comparison} and it is defined by
\[
\ell(x)=-\frac{1}{N}\displaystyle\sum_{j=1}^N \log \dfrac{\exp(x_{y_j}^T z_j)}{\sum_{i\in C} \exp(x_i^Tz_j)},
\] 
where $N$ is the number of samples used for the recognition, $C$ denotes the set of all class labels, $y_j$ the label associated to the training points $j$, $z_j$ is the feature vector and $x_i$ is the parameters' subvector of class label $i$.  Again, the parameter $\beta$ afects the sparsity of the solution as it increases. In this context the non--zero entries of the solution are interpreted to be the most representative parameters for the classification function. %The information of the gradient of the loss function and other results were taken from~\cite{byrd2011use}. 

In our experiment we train a Statlog--Satellite database, consisting of sub-area images of size 82 x 100 pixels.
%, the database is a (tiny) sub-area of a scene, consisting of 82 x 100
%pixels. Each line of data corresponds to a 3x3 square neighbourhood of pixels completely contained within the 82x100 sub-area. Each line contains the pixel values in the four spectral bands of each of the 9 pixels in the 3x3 neighbourhoodand a number indicating the classification label of the central pixel. 
The aim is to classify pixels from satellite images as \emph{red soil}, \emph{cotton crop}, \emph{damp grey soil}, \emph{soil with vegetation stubble}, \emph{mixture class} and \emph{very damp grey soil}. More details of this database can be found in \cite{Lichman:2013}. %the central pixel and with this we avoid the problem which arises when a 3x3 neighbourhood straddles a boundary.
%The data set has 7 classes, and as we have 4 features (corresponding to spectral bands) per each pixel, thus each training point has 
%Each pixel of the image to be classified based on 36 features per pixel, which are obtained  using the information of the surrounding pixels.  

%
%\begin{itemize}
%\item read soil (1)
%\item cotton crop (2)
%\item grey soil (3)
%\item damp grey soil (4)
%\item soil with vegetation stubble (5)
%\item mixture class (6)
%\item very damp grey soil (7)
%\end{itemize}
%
%Each feature value is a 8-bit binary word, with 0 corresponding black and 255 to white. The goal of the experiment is to classify the central pixel in each neighborhood in one of the 7 classes listed above. This will be achieved by searching for the parameter $w$ that maximizes the probability of correct classification over the training set, it means by minimizing~\eqref{eq:machine_prob}. In order to prepare the data for the optimization process we first normalize it. In table~\ref{tab:data_clas} we present a summary of the results of comparing the time required for each of the algorithms to traint the data {\cor after 50 iterations ????}. 

We test the three algorithms \row, \nwcg and \owl for solving this problem. The results are shown in Table \ref{tab:data_clas}. It can be observed that our method slightly outperforms the others, with respect to execution time, for this particular experiment.
\begin{table}[h]
\scriptsize
\centering
\begin{tabular}{|l | c | c|}
\hline
\textbf{Algorithm}& \textbf{Cost function} &  \textbf{Time (h)}  \\ \hline
OESOM & 0.9361 & 1.14\\ \hline
NW--CG & 0.9361&  1.63\\  \hline
%Algoritmo 3 & 52& 1.5074\\ \hline
OWL &  0.9396 & 1.58\\  \hline
\end{tabular} 
\caption{Time spent in training a dataset training problem}
\label{tab:data_clas}
\end{table}
\begin{figure}[H]  
\centering
\includegraphics[width=6cm,height=5.cm]{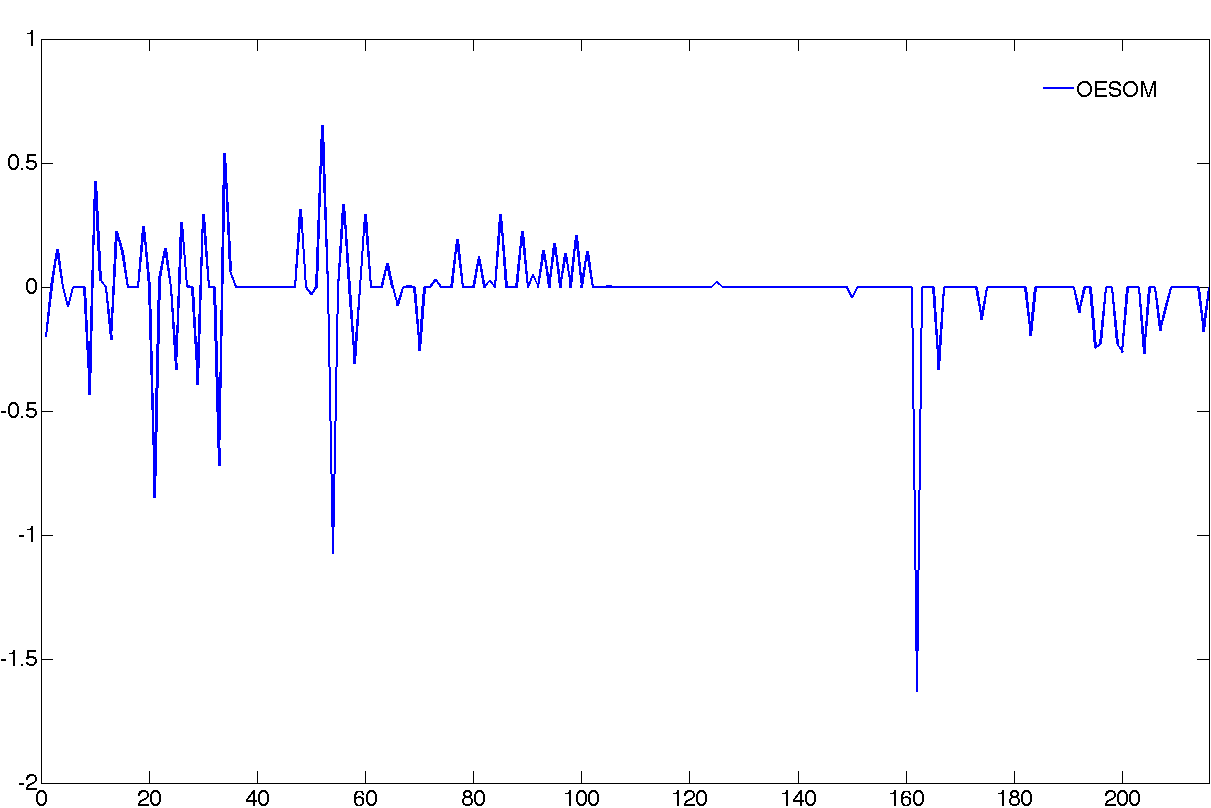} \qquad \includegraphics[width=6cm,height=5cm]{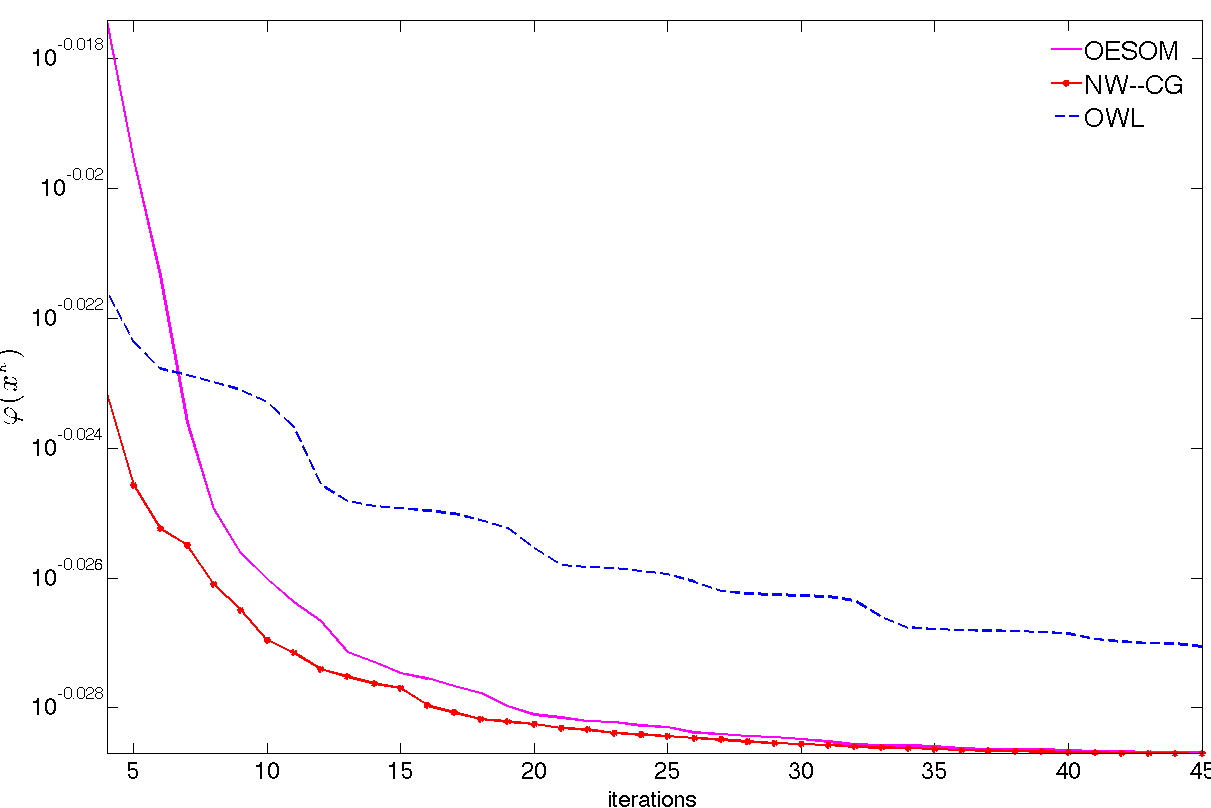} 
\caption{Training data set example: Solution (left) and evolution of the cost function value (right)}
\label{fig:data_clas}
\end{figure}

\subsection{Numerical properties of \row}

\subsubsection{Monotonicity of active sets} On basis of the analysis carried out in Section \ref{s:activesets}, we expect a monotone behavior of the null components of $z^k$ (strong active set) in a neighbourhood of the solution. Furthermore, $\gamma$ theoretically affects the size of such neighbourhood in the case of \row. To test this, we consider first the PDE--constrained optimization experiments from Subsection \ref{s:PDE-test}. In Figure \ref{f:activesets2} we show the size of the null components of the orthant direction $z^k$ in each iterations, for several choices of the regularization parameters $\alpha$ and $\beta$. We observe that \row exhibits a monotone increase in the cardinality of the active set earlier than the other algorithms.

\begin{figure}


\includegraphics[width=6cm,height=5cm]{NEW_ACTIVESETS_EXPERIMENT1.png}\qquad \includegraphics[width=6cm,height=5cm]{NEW_ACTIVESETS_EXPERIMENT2.png}\qquad \includegraphics[width=6cm,height=5cm]{NEW_ACTIVESETS_EXPERIMENT3.png}\qquad 
\includegraphics[width=6cm,height=5cm]{NEW_ACTIVESETS_EXPERIMENT4.png}
%\includegraphics[width=5cm,height=5cm]{FIGURE_ACTIVESETS_EXPERIMENT4.png}\hfill \includegraphics[width=5cm,height=5cm]{FIGURE_ACTIVESETS_EXPERIMENT5.png}
%\figuretopcapfalse
\caption{Evolution of the active sets: PDE-constrained optimization test.} 
\label{f:activesets2}
\end{figure}

Next we consider a similar monotonicity test for randomly generated LASSO problems as described in Subsection \ref{s:lassoex}. We consider four different sizes of problems and take 10 samples in each case. In Figure \ref{fig:SLS_card} we color a square with blue in case a larger active set is reached in that iteration with respect to the previous one, otherwise we color that square with red. A large dominant blue behaviour is observed in Figure \ref{fig:SLS_card}, where the red squares are rarely present. This is in agreement with our theoretical findings, mainly Theorem \ref{s:activesets}.

 \begin{figure}
 \centering
 \includegraphics[height=5cm,width=6cm]{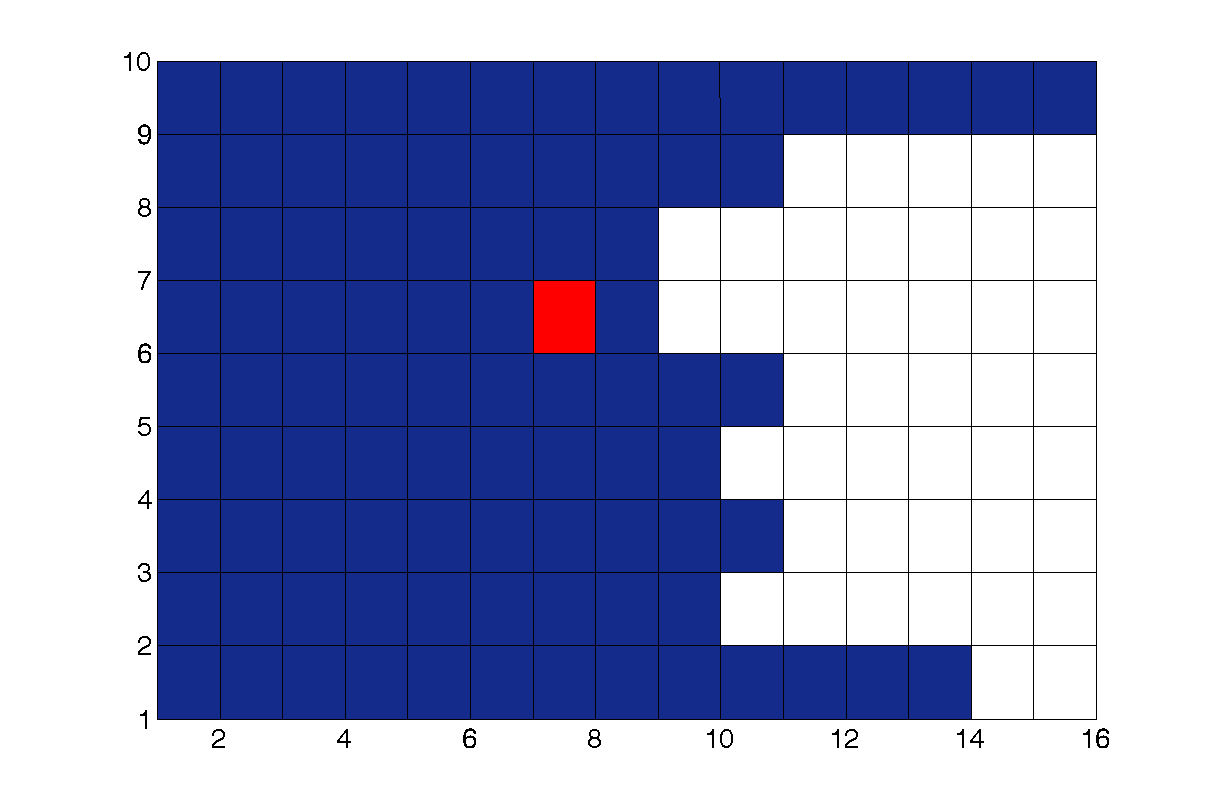}\qquad \includegraphics[height=5cm,width=6cm]{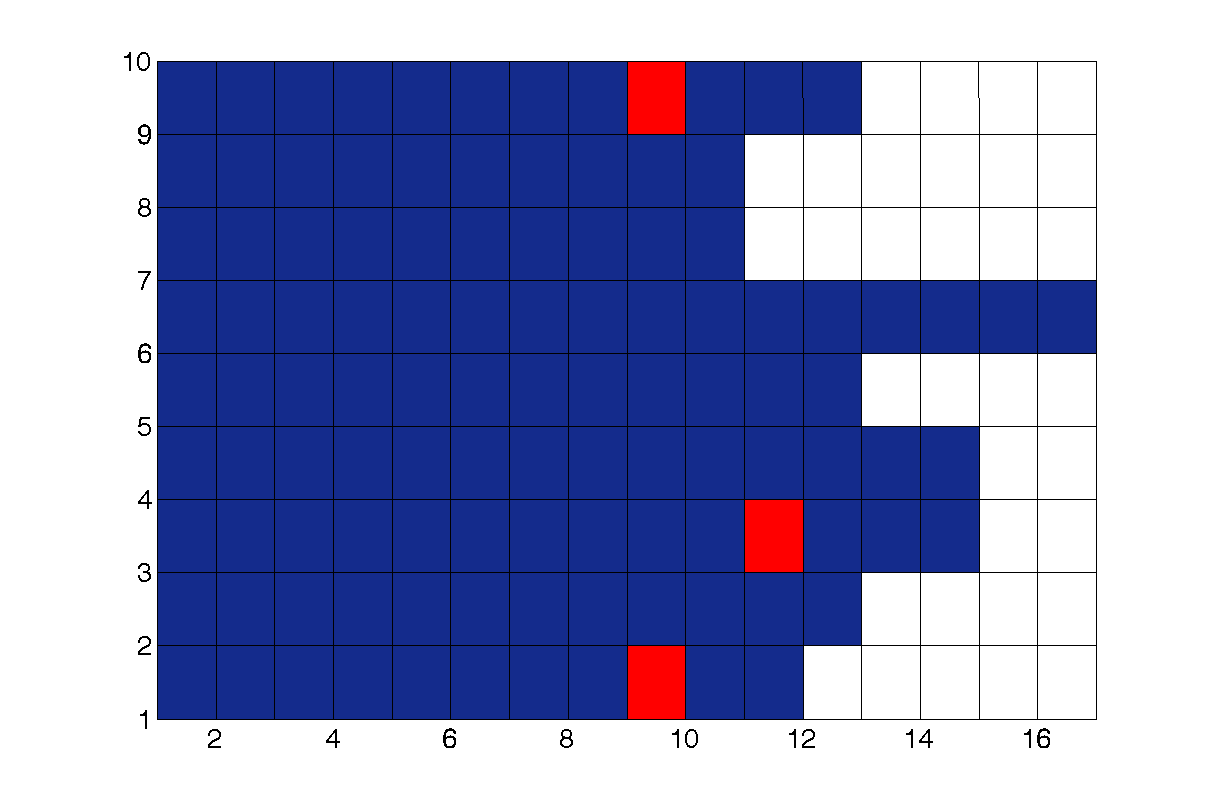}\hfill \includegraphics[height=5cm,width=6cm]{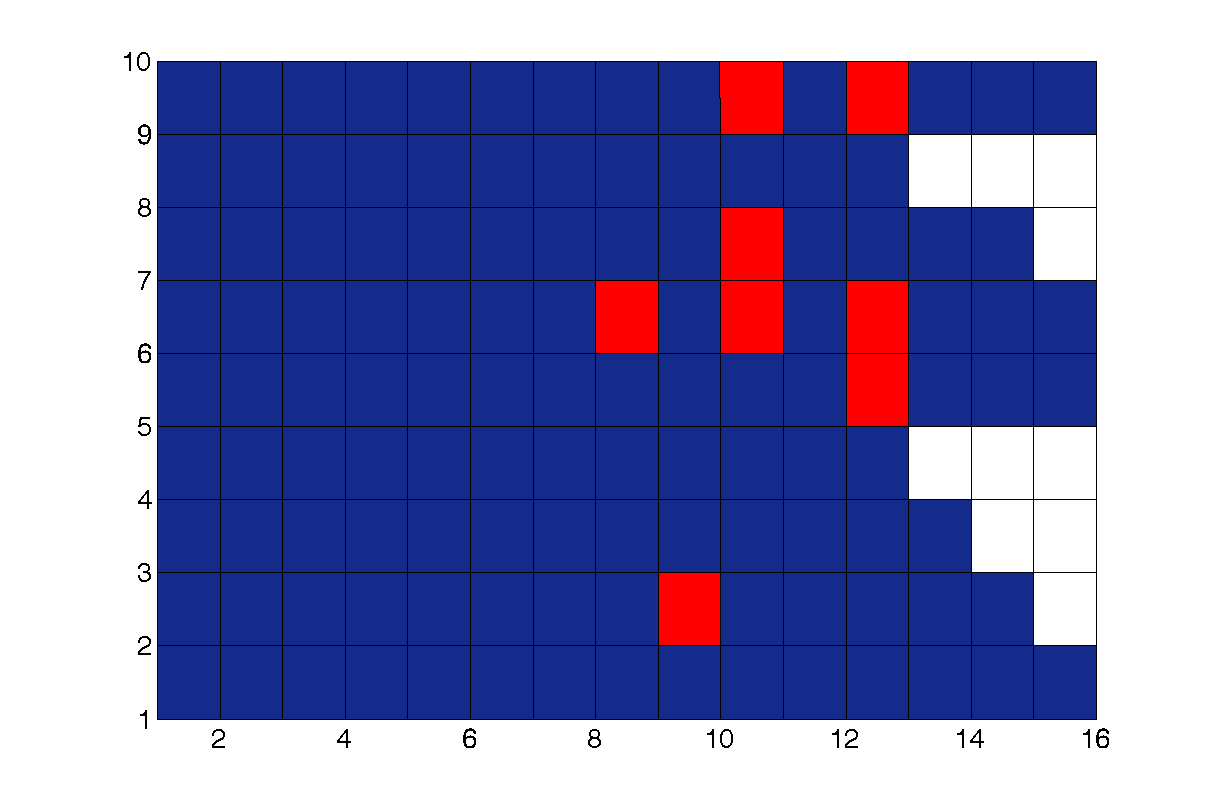}\qquad \includegraphics[height=5cm,width=6cm]{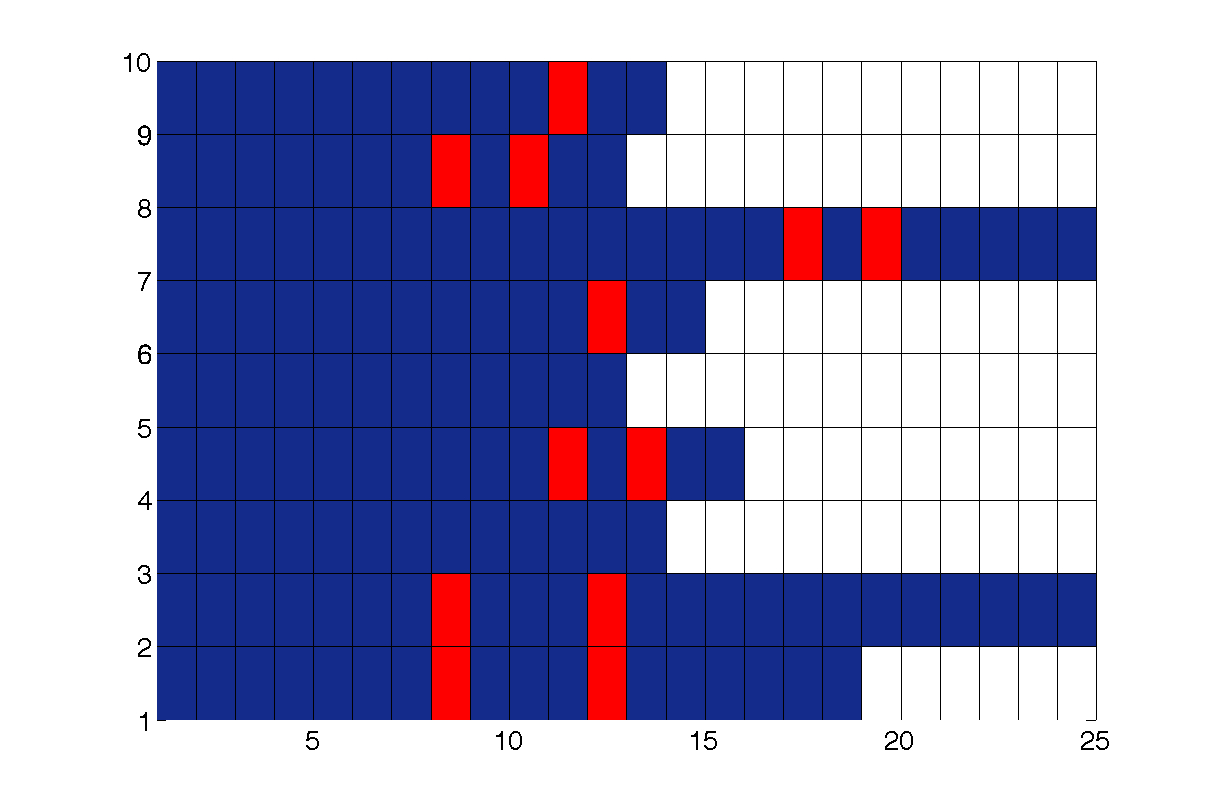}
 \caption{Evolution of the active sets: Randomly generated LASSO. Size of the problems: Upper-left ($400\times 200$), upper-right ($800\times 400$),lower-left ($1200\times 600$) and lower-right ($1600\times 800$).}
 \label{fig:SLS_card}
 \end{figure}

\subsubsection{Reduced Orthantwise Enriched Second Order Method (R--OESOM)}\label{ss:roesom} In Section \ref{s:roesom}, we introduced Algorithm \ref{alg:roesom} which may be interpreted as a semismooth Newton method by an appropriate choice of the regularization parameter $\gamma$. One important advantage of this algorithm is that size of the linear system \eqref{eq:system for reduced oesom} is considerably smaller than \eqref{eq:direction}, depending to how sparse the solution is. We next present the numerical perfromance of this reduction strategy by solving the set of LASSO problems described in Section \ref{s:lassoex}, and comparing with the original \row. The numerical results of this comparison are presented in Table \ref{tab:roesome}, where a competitive bahaviour of the reduced method can be observed in terms of execution time and number of iterations.

\begin{table}[H]
\centering
\scriptsize
\label{tab:roesome}

\begin{tabular}{|c|c|c|c|c|c|c|c|c|c|}
\hline
\multicolumn{2}{|c|}{} & \multicolumn{4}{c|}{Time (s)}  & \multicolumn{4}{c|}{Iterations}                                                                                          \\ \hline
\multicolumn{2}{|c|}{SIZE}       & \multicolumn{2}{c|}{OESOM} & \multicolumn{2}{c|}{REDUCED OESOM} & \multicolumn{2}{c|}{OESOM} & \multicolumn{2}{c|}{REDUCED OESOM}  \\ \hline
$m$             & $n$            & MEAN        & SDV          & MEAN         & SDV & MEAN & SDV & MEAN & SDV \\ \hline
400             & 200            & 0.0881        & 0.0576       &0.0810 &0.0431 & 8.20      & 1.475    &8.10    &1.5239    \\ \hline
800             & 400            & 0.1825        & 0.0251       &0.1600&0.0206& 8.60     & 0.9661    &8.20   &1.2293     \\ \hline
1200            & 600            & 0.4486        & 0.0560       &0.4616&0.1062 & 8.80     & 1.1353     &8.20  &1.2293     \\ \hline
1600            & 800            & 1.0372        & 0.0897        &0.7417&0.0592 & 9.70     & 0.9487     &7.60  &0.5164    \\ \hline
2000            & 1000           & 2.5998        & 0.9613        &1.7928&0.2843 &11.30     & 3.7727    &7.80     &0.9189    \\ \hline
2400            & 1200           & 5.1059       & 1.9218        &2.1667&0.2780 & 14.90          & 5.7822    &7.50  &0.5270\\\hline
\end{tabular}
\caption{Comparison of \row  and  \textbf{(R--OESOM)} for randomly generated LASSO problems.}
\label{tab:roesome}
\end{table}

\subsubsection{Inexact Orthantwise Enriched Second Order Method (I--OESOM)}\label{ss:ioesom}

In order to make \row even more efficient, we consider an inexact variant of the algorithm, where the associated linear system is solved only approximately in each iteration, according to the following rule:
\begin{equation}\label{eq:inexacto}
\parallel \left(B_k+ \Gamma_k\right)d^k+\widetilde\nabla\varphi_k\parallel \leq \xi\parallel \widetilde\nabla \varphi_k\parallel, 
\end{equation}
where $\xi$ is a chosen tolerance. The linear system is solved (inexactly) by using Arnoldi's method (see, e.g., \cite{saad2003}).

%\begin{algorithm}
%\caption{\textbf{(I--OESOM)}}\label{I_OESOM}
%\begin{algorithmic}[1]
%\State Initialize  $x^0$ and $B^0$. %$B=\frac{1}{2} \mathbb{I}_{n\times n}$. 
%\Repeat
%\State Compute the matrix $\Gamma^k$ using~\eqref{eq:gammak} .%$\partial^2 H_\gamma(x)=\dfrac{\gamma}{\max{\{1,\gamma|x|\}}}-\chi\cdot\dfrac{\gamma x q}{\left(\max\{1,\gamma|x|\}\right)^2 \max\{1,|q|\}}$
%\State Compute the descent direction $\widetilde{\nabla}\varphi(x^k)$ using~\eqref{eq:stp_direc}.
%\State Compute $d^k$ by solving (approximately) the linear system
%\[
%\left(B^k+\beta\Gamma^k\right)d^k=-\widetilde\nabla\varphi(x^k),
% \] 
%according to the rule ~\eqref{eq:inexacto}. %$\left(B^k + \beta \partial^2 H_\gamma(u)\right) p^k = - \nabla f(x)=-\left( \nabla l (x) + \beta z^k\right)$
%\State Compute $$x^{k+1}=\mathcal P(x^k + s_k d^k),$$ 
%\indent with step size $s_k$ computed using ~\eqref{eq:line search}.
%%\State Project $x^{k+1}$ onto the active orthant using \eqref{eq:projection}.
%%\State $s_k=x^{k+1}- x^k$.
%%\State $y_k=\nabla f\left(x^{k+1}\right)- \nabla f\left(x^{k}\right)$.
%\State Update $B^k$.%$B^{k+1}=B^k + \dfrac{B^k s_k s_k^T B^k}{s_k^T B^k s_k}+  \dfrac{y_k y_k^T}{y_k^T s_k}$
%\State $k \gets k+1$.
%\Until{\hbox{stopping criteria is satisfied}} 
%\end{algorithmic}
%\end{algorithm}

We use the PDE-constrained optimization problem of Section \ref{s:PDE-test} to test the inexact variant of (\textbf{OESOM}) on the discretized unit square with $3844$ internal nodes. Moreover, we have set the parameters $\alpha=2e-5$, $\nu=1$ and $\gamma=1e4$.  In Table~\ref{tab:inexact} we compare the performance with respect to the original \row algorithm and different tolerances $\xi$ for the inexact strategy. The last two rows of the table correspond to variable tolerances defined by $\xi_k=\{(\nicefrac12)^k\}$ and $\xi_k=\parallel \widetilde\nabla \varphi(x_k)\parallel$ commonly used in the literature (see, e.g., \cite{geiger1999numerische}).
From these results we may infer that the inexact strategy actually helps to reduce the computational cost of the corresponding \row variant, without damaging the convergence properties of the method. This fact actually deserves future theoretical investigation.
\begin{table}[h]
\centering
\scriptsize
\begin{tabular}{|l |c| c | c|}
\hline
\textbf{Algorithm}& \textbf{Iterations}& \textbf{Cost function} &  \textbf{Time (s)}  \\ \hline
OESOM & 9& 1.564 & 9.46\\ \hline
I--OESOM ($\xi=1e-1$) & 9&  1.564&  2.58\\  \hline
%Algoritmo 3 & 52& 1.5074\\ \hline
I--OESOM ($\xi=1e-2$) & 8& 1.564 & 2.8 \\  \hline
I--OESOM ($\xi=1e-3$)& 8&1.564&3.02 \\ \hline
I--OESOM ($\xi=(\nicefrac12)^k$)& 8&1.564&2.15 \\ \hline
I--OESOM ($\xi=\parallel\widetilde\nabla \varphi_k\parallel$)& 12&1.565&3.86\\ \hline
\end{tabular} 
\caption{Numerical performance of the inexact version of \row.}
\label{tab:inexact}
\end{table}

\subsubsection{Varying the sparsity penalization coefficient $\beta$} {\label{s:continuation}}

%\nota{Me falta revisar esto, luego de que se resuelva el comentario de abajo acerca de la discrepancia en las soluciones}

In this experiment we consider again the optimal control problem \eqref{e:OCP} with $\alpha=0.00002$. Here, we reconstruct the solution path for different sparsity levels by changing the value of $\beta$ up to the critical value $\beta_0=\parallel \widetilde\nabla\varphi(0)\parallel_\infty$. Figure \ref{fig:cont1} shows plots a logarithmic growth for the sparsity of the solution as well as its associated cost. 

\begin{figure}[H]
\includegraphics[width=5.5cm,height=4cm]{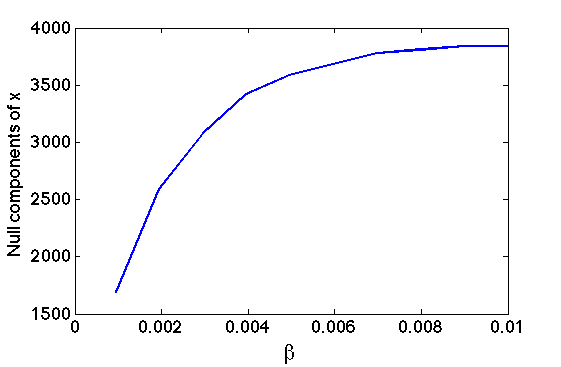}\, \includegraphics[width=5.5cm,height=4cm]{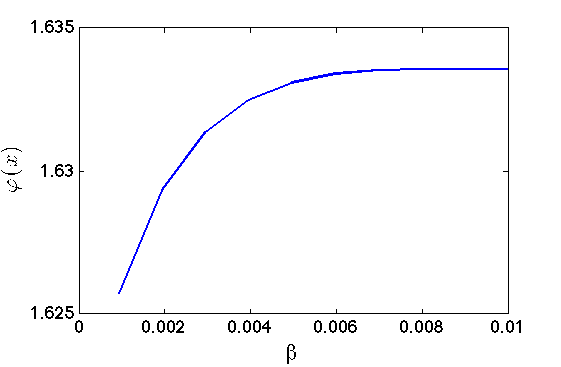}
\caption{Null components of solution (left) and cost function value (right) for different values of $\beta$ }%\nota{el segundo gráfico tiene diferente formato}}
\label{fig:cont1}
\end{figure}

In order to measure the efficiency of \row in reconstructing the solution path, we evaluate its performance with and without a continuation strategy. First, we solve this family of problems with the same initial iterate for each different value of $\beta$. Then we solve the same family of problems by a continuation strategy, where  the optimal solution for the previous $\beta$ value is used to initialize the algorithm to solve the problem with the next $\beta$ value. As expected, the continuation strategy is considerably more efficient, as presented in Table \ref{tab:continuation}.

\begin{table}%[H]
\centering
\scriptsize
\begin{tabular}{|c|c|c|c|c|}
\hline
\multirow{2}{*}{$\beta$} & \multicolumn{2}{c|}{Iterations} & \multicolumn{2}{c|}{Time (s)} \\ \cline{2-5} 
                         & WC              & C             & WC            & C             \\ \hline
0.009                    & 8               & 8             & 11.0625       & 11.0650       \\ \hline
0.0019                   & 12              & 5             & 19.3686       & 6.6957                \\ \hline
0.0030                   & 8               & 5             & 10.7736       & 6.7271              \\ \hline
0.0040                   & 11              & 4             & 17.1299       & 4.2611             \\ \hline
0.0050                   & 10              & 5             & 15.2321       & 6.7587           \\ \hline
0.0060                   & 13              & 3             & 21.9843       & 2.2405               \\ \hline
0.0070                   & 13              & 5             & 23.8750       & 6.2499             \\ \hline
0.0080                   & 11              & 3             & 19.7571       & 2.2646                \\ \hline
0.0090                   & 2               & 5             & 0.0479        & 0.1183                \\ \hline
0.0100                   & 2               & 1             & 0.0467        & 0.0207            \\ \hline
\end{tabular}
\caption{Numerical performance of \row for different levels of sparsity. WC stands for the numerical performance without continuation, and C with continuation.}
\label{tab:continuation}
\end{table}
%\nota{raz\'on por la que los valores de la funcion objetivo y componentes nulas no es la misma para $\beta$ iguales?}

\section{Conclusions}
By using weak information of the $\ell_1$-norm, through partial Huber regularization, we are able to obtain extra second-order information of the objective function, which is integrated in the computation of the descent direction. This information turns out to be important for a faster identification of the active-sets and improved convergence properties of the resulting algorithm.

A reduced version of the proposed method has been proved to be equivalent to a semismooth Newton scheme with the special choice $\tau=\delta=\frac{1}{\gamma +1}$ of the \ssn parameters. Since the \ssn is a fast local method, such convergence properties are also inherited by our algorithm. Moreover, thanks to this interpretation, an adaptive update strategy for the regularization parameter has been proposed.

Finally, the performance of the proposed algorithms turns out to be competitive with respect to other state-of-the-art methods. Specifically, we exhaustively compared the efficiency of \row with respect to the second--order algorithms \owl, \nwcg and ({\bf pdNCG}), and the popular first-order algorithm ({\bf FISTA}), for three different families of large-scale optimization problems. Also an inexact variant of \row was tested with promising results. From the numerical experiments carried out, a competitive performance of the proposed algorithms was verified.

%\nota{Estefania, puedes porfa revisar la bibliografía para corregir errores de tipeo y otro que critican los referees?}

\bibliographystyle{plain}
\bibliography{biblio}
\nocite{*}
\end{document}